\DeclareMathOperator*{\argmin}{arg\,min}
\renewcommand{\vec}[1]{%
	\ifcat\relax\noexpand#1%
	\ensuremath{\boldsymbol{\lowercase{#1}}}%
	\else
	\ensuremath{\mathbf{\lowercase{#1}}}%
	\fi
}
\newcommand{\R}{\ensuremath{\mathbb{R}}}
\newcommand{\T}{\top\!}
\newcommand{\SF}[1]{\left\|#1\right\|_{S,F}}
\newcommand{\Sdue}[1]{\left\|#1\right\|_{S,2}}
\definecolor{matlabred}{rgb}{0.9047,    0.1918,    0.1988}
\definecolor{matlabblue}{rgb}{0.2941    0.5447    0.7494}
\definecolor{matlabgreen}{rgb}{	0.3718    0.7176    0.3612}
\definecolor{matlaborange}{rgb}{1.0000    0.5482    0.1000}
\algnewcommand{\LineComment}[1]{\Statex \(\%\) \small \textit{#1} \(\%\)}
\title{$S^{\top\!}S$-SVD {\color{black}via sketching} and the nearest {\color{black}$S^{\top\!}S$-}orthogonal matrix%
\thanks{Version of \today}}
\author{Davide Palitta\thanks{Dipartimento di Matematica and (AM)$^2$,
Alma Mater Studiorum Universit\`a di Bologna,
Piazza di Porta San Donato  5, I-40127 Bologna, Italy,
{\tt \{davide.palitta, valeria.simoncini\}@unibo.it}}
 \and Valeria Simoncini\thanks{IMATI-CNR, Pavia, Italy. }}
\begin{document}
\maketitle

%\renewcommand{\thefootnote}{\fnsymbol{footnote}}
%\maketitle \pagestyle{myheadings} \thispagestyle{plain}
%\markboth{ D.\ PALITTA and V.\ SIMONCINI}{\sc $S^\T S$-svd and the nearest sketched orthogonal matrix}

%% ------------------------------------------------------------------
%% ABSTRACT
%% ------------------------------------------------------------------

\vskip 0.1in
\begin{center}
{\it This paper is dedicated to \AA{}ke Bj\"orck, on the occasion
of his 90th birthday.}
\end{center}
\vskip 0.1in

\begin{abstract}
Sketching techniques have gained popularity in numerical linear algebra 
to accelerate the solution of least squares problems. 
The so-called $\varepsilon$-subspace embedding property of a
sketching matrix $S$ has been largely used to characterize 
the problem residual norm, since the procedure is no longer optimal 
in terms of the (classical) Frobenius or Euclidean norm. 

By building on available results on the SVD of the sketched matrix $SA$ derived by
Gilbert, Park, and Wakin (Proc. of SPARS-2013),
 a novel decomposition of $A$,
the $S^\T S$-SVD, is proposed, which {\color{black}\emph{holds} with high probability,} and in which
the left singular vectors  are orthonormal
with respect to a (semi-)norm defined by the sketching matrix $S$. 
The new decomposition is less expensive to compute {\color{black}than the standard SVD}, while preserving
the singular values with probabilistic confidence.
The $S^\T S$-SVD appears to be the right tool to analyze the quality of
several {\color{black}sketching-based} techniques in the literature, for which examples are reported.
For instance, it is possible to simply bound the distance from (standard) orthogonality 
of {\color{black}sketching-based} orthogonal matrices in state-of-the-art {\color{black}randomized algorithms for QR factorizations}.
As an application, the classical problem of the nearest orthogonal matrix is generalized
 to the new $S^\T S$-orthogonality, and 
 the $S^\T S$-SVD is used to solve it. Probabilistic bounds on the quality of the solution
are also derived.
\end{abstract}

%\begin{keywords}
%Sketching, SVD, nearest orthogonal matrix.
%\end{keywords}
\keywords{Sketching\and  SVD \and nearest orthogonal matrix}

%\begin{MSCcodes}
\subclass{MSC 65F99 \and MSC 68W20}
%\end{MSCcodes}

%% ------------------------------------------------------------------
%% END HEADER
%% ------------------------------------------------------------------

%%%%%%%%%%%%%%%%%%%%%%%%%%%%%%%%%%%%%
\section{Introduction}

The inspiration for this work
 comes from~\cite{Bjorck} where, for a given
matrix $A\in{\mathbb R}^{m\times n}$, Bj\"orck and Bowie studied the problem
\begin{equation}\label{eq:main0}
 \min_{\substack{Q\in\mathbb{R}^{m\times n} \\Q^\T Q=I}}\|A-Q\|_{*},
\end{equation}
with $m\geq n$ and $*=2,F$, that is, the spectral and Frobenius norms.
The solution $Q$ to~\eqref{eq:main0} is given by the 
orthogonal factor of the polar decomposition (\cite[Th.1.2.4]{Bjorck.book.96})
of $A$; see \cite{Bjorck}, and, e.g., the presentation and bibliography
in \cite{doi:10.1137/090765018}. 
However, this decomposition may be expensive to compute for large-scale problems,
hence Bj\"orck and Bowie proposed an iterative algorithm to solve~\eqref{eq:main0}; see~\cite[Section 2]{Bjorck}.

With the same desire of accelerating the solution of~\eqref{eq:main0} 
{\color{black} when $A$ has large rank},  we instead explore the use of
state-of-the-art randomization-based strategies known as sketchings.
Since the seminal work~\cite{Sarlos2006}, sketching techniques have shown 
their potential in speeding up the numerical solution of {\color{black}massively overdetermined}
least squares problems; see, e.g.,~\cite{Blendenpik,RokTyg2008}.
%Roughly speaking, 
These strategies reduce the row dimension of the coefficient matrix 
by projection, using suitable linear maps.
Under certain probabilistic conditions, these maps can be ensured to have good
metric properties. More precisely,
%Thanks to their appealing properties, oftentimes these transformations amount to 
%oblivious $\varepsilon$-subspace embeddings. In particular, 
for $\varepsilon\in (0,1)$ and
for any vector $v$ in a $k$-dimensional vector space $\mathcal{V}\subset\mathbb{R}^m$, it is possible to construct a linear map, called
an {\it oblivious $\varepsilon$-subspace embedding} 
$S:\mathbb{R}^m\mapsto\mathbb{R}^s$, $s\ll m$, such that 
$|\|Sv\|^2-\|v\|^2|\leq\varepsilon$ with high probability.
We remark that $S$ can be selected without knowing 
$\mathcal{V}$ itself but relying only on its dimension $k$.
Applying sketching to a least squares problem can be interpreted
 as recasting the original problem in terms of a different norm, 
the $S^\T S$-norm $\|v\|_{S^\T S}^2=v^\T S^\T Sv$. 
Although $S^\T S$ is only semidefinite in general, the above metric property
ensures that with high probability $S^\T S$ 
 defines a positive definite norm 
on the embedded space; see, e.g.,~\cite{BalabanovNouy19}.

We aim to explore this non-standard norm and its theoretical and
 computational properties
in the solution of \eqref{eq:main0}. 
%is rather straightforward, the analysis 
%of the obtained sketched problem is not as easy. 
To this end, by building upon available results on the SVD of $SA$
\cite{Gilbertetal.14}, we propose a novel decomposition 
of $A$ that we name $S^\T S$-SVD, which {\color{black}\emph{holds}}
{with high
probability}. This amounts to an SVD-like decomposition 
of $A$, where the left singular vectors $w_j$s are $S^\T S$-orthonormal,
namely $w_j^\T S^\T Sw_i=\delta_{ij}$, $i,j=1,\ldots,n$; see section~\ref{SS-SVD}.
The magnitude of the $S^\T S$-singular values of $A$ can be related to their standard counterparts via the threshold $\varepsilon$ associated with the adopted embedding 
$S$~\cite{Gilbertetal.14}. 
This is one of the key features of the $S^\T S$-SVD that allow us to
analyze the sketched version of~\eqref{eq:main0}. Moreover, we can 
derive explicit relations between the solution to the problem in the non-standard
norm and the solution $Q$ to~\eqref{eq:main0}.

{\color{black}
The idea of sketching a matrix's SVD is certainly not new, and it has been introduced together with the development of sketching algorithms \cite{Halko2010}, \cite{Martinsson_Tropp_2020},\cite{Sarlos2006}. However, to our knowledge, {\it sketched SVD}  usually refers to the construction of a reduced surrogate of $A$, built %using an $S^\top S$-orthogonality 
to preserve with high probability some of the properties of the singular values of $A$ and of the range of $A^T$ \cite{Gilbertetal.14},\cite{RokTyg2008}. Alternatives include randomized SVD associated with range-finding strategies; see \cite{Kireeva.Tropp.tr23} and the references therein. This work} {\color{black} %The $S^\T S$-SVD
provides a new point of view on the use of SVD-related sketching techniques, as it aims at constructing a full decomposition of $A$, by computing an  $S^\top S$-orthogonal basis for the range space. 
Indeed, the %latter ones are traditionally employed to compute approximations, or \emph{sketches}, of the object of interest, which is tipically too large to handle. The underlying assumption is that the computed sketches preserve, to some extent, some of the quantities of interest of the original object and the $\varepsilon$-subspace embedding property is often employed to characterize such preservation. The 
$S^\T S$-SVD computes a factorization of  $A$, holding
with high probability.}
%{\color{red}, where the exactness of such factorization is subject to a failure probability due to the randomized nature of the sketching $S$. In principle, there is no reason to prefer the standard SVD over the $S^\T S$-SVD when the latter is successful: they both provide important insights on the factorized matrix $A$ like its rank or a basis of its range with ($S^\T S$-)orthonormal columns; see section~\ref{SS-SVD}. }
{\color{black}By relying on the concept of $S^TS$-orthogonality, we are able to leverage the computational gains of sketching while attaining full factorizations with probabilistic confidence. }

We believe that the $S^\T S$-SVD will be a crucial tool also in attaining 
a complete understanding of sketching techniques applied to general least 
squares problems. For instance, several results available in the literature are
natural consequences of the $S^\T S$-SVD. 
To evaluate the quality of the new orthogonality constraint, we also
estimate the distance from Euclidean orthogonality of the computed 
$S^\T S$-orthogonal matrices.   
%We also derive novel results bounding quantities of the form 
%$\|P^\T P-I\|_*$, $*=2,F$, where $P\in\mathbb{R}^{m\times n}$ has 
%$S^\T S$-orthonormal columns. 
These can be used, for instance, to provide 
certain orthogonality
guarantees in state-of-the-art {\color{black}randomized algorithms for QR factorizations}.

 As possible motivating applications, we 
envision the employment of the $S^\T S$-SVD also in other settings. 
As an example, orthogonal matrices are a fundamental 
ingredient in problems %with orthogonality constraints,
where the solution is constrained to belong to a space of matrices having orthonormal
columns, the Stiefel manifold; see, e.g., \cite{Absiletal.08},\cite{eas:99}. {\color{black} These types
of constraints are} particularly convenient in the solution of certain
differential equations, because the orthogonal space allows the solution method
to preserve key properties of the dynamical system \cite{Celledoni2002},\cite{Hairer2002}.
Beyond this, orthogonality may be a component of computational strategies enforcing a
low-rank manifold representation of the flow
  as time integration proceeds, see, e.g., \cite{Cerutietal.22}.
The use of sketched orthogonality, that is of a ``sketched'' Stiefel manifold,
may help decrease the computational costs of these procedures when
large systems of differential equations arise. In other words, this strategy
may be viewed as an $\varepsilon$-controlled quasi-orthogonality structure,
and may lead to the supervised relaxation of orthogonal-manifold-based models. 
%The $S^\T S$-SVD will be a remarkable ally in the study of this intriguing approach.

Next is a synopsis of the paper. 
In section~\ref{Subspace embeddings and randomized QR} we recall basic properties 
of oblivious $\varepsilon$-subspace embeddings and their use in the so-called 
randomized QR factorization. Section~\ref{SS-SVD} reports
 the derivation of the $S^\T S$-SVD, its properties along with a 
fast algorithm for its computation. Some important scenarios where the $S^\T S$-SVD can be adopted are explored in section~\ref{On the applicability} whereas in section~\ref{sec:distorth} we employ
the $S^\T S$-SVD to bound  the distance 
of $S^\T S$-orthogonal matrices from ``standard'' orthogonality. 
%, like the ones computed by randomized QR algorithms. 
The sketched version of~\eqref{eq:main0} is presented and analyzed 
in section~\ref{The nearest sketched orthogonal matrix}. The paper 
ends with some conclusions in section~\ref{Conclusions}.

All the experiments reported in this paper have been run using Matlab (version 2024b) on a machine
with a 1.2GHz Intel quad-core i7 processor with 16GB RAM on an Ubuntu 20.04.2
LTS operating system.

%{\color{red}
%Th problem~\eqref{eq:main0} is strictly related to the so-called (balanced) Procrustes problem. In particular, given $C$,
%$B\in\mathbb{R}^{m\times n}$, this problem can be expressed as follows
%%
%\begin{equation}\label{eq:Procrustes_def}
% \min_{\substack{Q\in\mathbb{R}^{n\times n} \\Q^\T Q=I}}\|CQ-B\|_{*};
%\end{equation}
%%
%see, e.g.,~\cite{Schoenemann_1966}.

%The minimization problem~\eqref{eq:Procrustes_def} finds applications in diverse settings: psychometric and factor analysis~\cite{Green_1952,Hurley_Cattell_1962,Meredith_1977}, global positioning system problems~\cite{Bell_2003}, and
%the improvement of orthogonal factors in the Tucker decomposition of tensors~\cite{HOOI}, to name a few.

%It has been shown that~\eqref{eq:Procrustes_def} is closely related to~\eqref{eq:main0}. Indeed, it holds
%%
%$$ \min_{\substack{Q\in\mathbb{R}^{n\times n} \\Q^\T Q=I}}\|CQ-B\|_{*}= \min_{\substack{Q\in\mathbb{R}^{n\times n} \\Q^\T Q=I}}\|Q-C^\T B\|_{*},$$
%%
%so that the solution $Q$ to~\eqref{eq:Procrustes_def} is given by the orthogonal factor of the polar decomposition of $C^\T B$.
%}

%Let $A=W R$ be the reduced QR decomposition of $A$.
%The solution to (\ref{eq:main0}) is obtained as $Q=W UV^\T$, where
%$U, V$ are the orthogonal matrices containing the
%left and right singular vectors
% of $R$, that is, $R=U\Sigma V^\T$ in the SVD of $R$.

\vskip 0.1in

{\it Notation.} We use $x^\T$ to denote the transpose of the vector $x$.
We use the Euclidean norm for vectors, and the associated induced matrix
norm for matrices, which we call 2-norm $\|\cdot\|_2$. We also report results using
the Frobenius matrix norm, denoted by $\|\cdot\|_F$.
Given a % full rank\todo{DP: Do we need ``full rank'' here? $\Theta$ can be rank deficient.} rectangular 
matrix $A$, we denote with $A^\dagger$ its pseudo-inverse
(Moore-Penrose inverse).
Exact arithmetic is assumed throughout.

With some abuse of notation, throughout the paper
we shall adopt the terminology
``orthogonal matrix'' also for tall rectangular matrices whose columns
are orthonormal, without requiring the matrix to be square.

%%%%%%%%%%%%%%%%%%%%%%%%%%%%%%%%%%%%%%
 \section{Subspace embeddings and randomized QR}\label{Subspace embeddings and randomized QR}
In the last decade, randomization-based tools have been shown to be an important 
aid in decreasing the computational cost of a number of algorithms in numerical 
linear algebra and scientific computing in general.

One such tool is given by (oblivious) $\varepsilon$-subspace 
embeddings; see, e.g.,~\cite[Section 8.7]{Martinsson_Tropp_2020}. Here
we use the notation adopted in \cite{RandomGS}.

\begin{definition}[{\cite[Definition 2.3]{RandomGS}}]
 Let $\mathcal{V}$ be a $k$-dimensional subspace of $\mathbb{R}^m$. Then, given $\varepsilon\in(0,1)$ and $s\leq m$, a linear map $S\in\mathbb{R}^{s\times m}$ is said to be an $(\varepsilon,\delta,k)$-subspace embedding for $\mathcal{V}$ if
\begin{equation}\label{eq:def_subspace_emb}
(1-\varepsilon)\|v\|^2 \le \|S v\|^2 \le (1+\varepsilon)\|v\|^2,\quad \text{for any }v\in\mathcal{V},
\end{equation}
holds with probability $1-\delta$, at least.
\end{definition}

We observe that from a deterministic point of view, $S^\T S$ defines a {\it semi}-norm,
whereas within a probabilistic setting, the definition above ensures that
$\|Sv\|^2$ is greater than zero for $v \in {\cal V}$, $v\neq 0$, with high probability;
%For our purposes, it is important to stress that an 
%$\varepsilon$-subspace embedding 
%induces the quadratic form $\|v\|_{S^\T S}^2:=v^\T S^\T S v$ which,
%with high probability, is positive definite on the space 
%that embeds; 
see, e.g.,~\cite[Proposition~3.3]{BalabanovNouy19}.
A fact following from (\ref{eq:def_subspace_emb}) is that with high probability it
holds that
\begin{equation}\label{eq:normS}
-\varepsilon\|v\|^2 \le v^\T (I -  S^\T S) v \le \varepsilon\|v\|^2,
\end{equation}
%so that in particular, $\|I-S^\T S\| \le \varepsilon$.
which measures the distance of $S^\T S$ from acting as the identity.

Various choices of randomized linear maps $S$ have been proposed in the literature. 
An incomplete list includes Gaussian transformations, sparse sign matrices, and subsampled trigonometric functions; see, e.g.,~\cite[Section 9]{Martinsson_Tropp_2020}. Given the 
dimension $k$ of the subspace to be embedded, the threshold $\varepsilon$, and the failure probability $\delta$, one can select suitable sketching dimensions $s$ to ensure~\eqref{eq:def_subspace_emb}.
Our derivations do not depend on the nature of the
%In this paper we are not interested in the nature of the 
sketching $S$ as long as~\eqref{eq:def_subspace_emb} holds. 
  Hence, in the following we will only assume
that the adopted $S$ is a {\color{black}randomized $(\varepsilon,\delta,\text{rank}(A))$}-subspace embedding
for $\text{Range}(A)$ so that the $S^\T S$-norm is well-defined on 
this space with high probability.

\begin{definition}\label{Def:SSnorms}
 Given a matrix $P\in\mathbb{R}^{m\times n}$ and a sketching matrix
$S\in\mathbb{R}^{s\times m}$, if $S$ is a {\color{black}randomized $(\varepsilon,\delta,\text{rank}(P))$}-subspace
embedding for $\text{Range}(P)$ then we can define the following matrix norms
$$
\SF{P}^2:={\rm trace}(P^\T S^\T S P),
$$
and
$$
\Sdue{P}^2:=\max_{\|x\|_2=1}x^\T P^\T S^\T S P x.
$$
\end{definition}
%\todo{DP: any other notation for the norms is welcome. Just change the macros}
Thanks to~\eqref{eq:def_subspace_emb}, it follows that % is easy to show that
$$
(1-\varepsilon)\|P\|_F^2 \le \SF{P}^2 \le (1+\varepsilon)\|P\|_F^2,\quad\text{and}\quad (1-\varepsilon)\|P\|_2^2 \le \Sdue{P}^2 \le (1+\varepsilon)\|P\|_2^2;
$$
see, e.g.,~\cite[Corollary 3.1]{Szyldetal2024}.
Note that both results also readily follow from using the singular 
value bounds in Theorem~\ref{Th:boundsingularvalues} below, 
derived in~\cite{Gilbertetal.14}.

Thanks to their ability in preserving norms up to a small distortion parameter, oblivious $\varepsilon$-subspace embeddings are the backbone of randomized algorithms {\color{black}for QR factorizations};
see, e.g.,~\cite{RandomGS,Szyldetal2024,RandGSreorth,RandGSHouse}. These procedures are able to cut down the cost of computing QR factorizations. 
On the other hand, they provide factorizations of the form $A=QR$ 
with \emph{almost} orthogonal $Q$ factor; we refer to 
section~\ref{sec:distorth} for a more detailed discussion of {\color{black} this distance to orthogonality}.

%\todo{VS:Is this correct?}
%%
%$$
%\SF{Q^\T Q-I}=0=\Sdue{Q^\T Q-I} .
%$$
%%%
%{\color{red}In Algorithm~\ref{alg:randGS} we report a naive implementation of a randomized Gram- Schmidt QR procedure; 
%see, e.g.,~\cite[Algorithm 2]{RandomGS}; 
%See also~\cite{RandGSreorth,RandGSHouse} for more robust orthogonalization variants.}
% and~\cite{Szyldetal2024} for the use of the multisketching paradigm in this context.
%
In the following we shall say that $Q$ is $S^\T S$-orthogonal, with high probability, if
$Q^\T S^\T S Q = I$, while $Q$ is orthogonal if $Q^\T Q=I$, where $I$ is the identity
matrix
of dimension equal to the number of columns of $Q$. 
%We note that different conditions associated with the term ``$S^\T S$-orthogonality''
% are sometime used in other contexts, such as, for instance,
%$Q^\T S^\T S Q = S^\T S$ \cite{Addref}.

%\begin{algorithm}[t!]
%\begin{algorithmic}[1]
%%\setstretch{1.2}
%\smallskip
%\Statex \textbf{Input:} $A=[a_1,\ldots,a_n]\in\mathbb{R}^{m\times n}$, $S\in\mathbb{R}^{s\times m}$.
%\Statex \textbf{Output:} $Q=[q_1,\ldots,q_n]\in\mathbb{R}^{m\times n}$, $R\in\mathbb{R}^{n\times n}$ such 
%that $A=QR$, $\SF{Q^\T Q-I}=\Sdue{Q^\T Q-I}=0$, $R$ upper triangular.
%\smallskip
%
%\State Set $s=Sa_1$, $R_{1,1}=\|s\|_2$, $q_1=a_1/R_{1,1}$, $s_1=s/R_{1,1}$
%\For{$k=2,\ldots,n$}
%\State Set $s=Sa_k$ and $q=a_k$
%\For{$j=1,\ldots,{k-1}$}
%\State  $R_{j,k}=s^\T s_j$\label{algline_innerproduct}
%\State  $s=s-R_{j,k}s_j$
%\State  $q=q-R_{j,k}q_j$
%\EndFor
%\State $R_{k,k}=\|s\|_2$
%\State $q_k=q/R_{k,k}$, $s_k=s/R_{k,k}$
%\EndFor
%\end{algorithmic}    \caption{Randomized Gram-Schmidt QR\label{alg:randGS} }
%\end{algorithm}

{One of the main advantages in employing the sketching matrix $S$ 
%in Algorithm~\ref{alg:randGS} 
is the decrease in the cost of the inner products,
% in line~\ref{algline_innerproduct} 
as vectors of length $s$ -- instead of $m$ -- are involved. }
On the other hand, $S$ needs to be applied to each of the 
$n$ columns of $A$. Therefore, matrix-vector products with $S$ 
must be as cheap as possible to obtain an
effective randomized orthogonalization;
see, e.g., the detailed analysis in \cite[Section 2.4]{RandomGS}. For instance, when $S$ amounts to a subsampled trigonometric transformation, 
like in Example~\ref{ex:1.1}--~\ref{ex:ex2} below, 
performing $Sx$ costs $\mathcal{O}(m\log s)$
 floating point operations (flops) if the action of $S$ is
judiciously implemented~\cite{WOOLFEetal2008}. This means that only $\mathcal{O}(mn\log s)$ flops are needed to compute $SA$. Notice that these costs can be further reduced in
case of a sparse $A$.

% for a more detailed analysis of 
%the computational performance of Algorithm~\ref{alg:randGS}.

Most available analyses on sketched QR {\color{black}procedures} focus on backward stability
issues, by  estimating the residual norm $\|A-QR\|_*$, and controlling the growth of
the condition number of $Q$, 
 $\kappa_*(Q)=\|Q\|_*\|Q^\dagger\|_*$,  for $*=2,F$; 
%and the overall stability of this numerical scheme; 
see, e.g.,~\cite{RandomGS,RandGSHouse}.
We enrich this analysis by investigating how far the columns of an $S^\T S$-orthogonal
matrix are from an orthogonal matrix.
If $S^\T S$ were positive definite this investigation would be very short, as
this distance would depend on the condition number of $S^\T S$. Our probabilistic
setting requires extra work.  
To give a glimpse of the type of expected results,
we report an upper bound on the cosine of the angle between two 
different columns of 
an $S^\T S$-orthogonal matrix $P$, and thus the Q factor computed 
by the sketched QR decomposition.
To this end, we first recall a result from \cite{burke2023gmres}, where
we use the notation
$$
\cos\angle(u,v) =\frac{u^\T v}{\|u\|_2\, \|v\|_2},
$$
for the cosine of the angle between two nonzero vectors.

\begin{lemma}({\rm \cite[Lemma 4.3]{burke2023gmres}})\label{lem:Sangle}
Let $S\in\mathbb{R}^{s\times m}$ be such that~\eqref{eq:def_subspace_emb} 
holds for two vectors $u,v\in\mathbb{R}^{m}$ and $\varepsilon>0$. Then with high
probability it holds that
$$
\frac{\cos\angle(u,v)-\varepsilon}{1+\varepsilon} \le
\cos\angle(Su,Sv)
\le \frac{\cos\angle(u,v)+\varepsilon}{1-\varepsilon} .
$$
\end{lemma}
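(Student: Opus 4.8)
The plan is to reduce the claim to two elementary one-sided estimates — one for the sketched inner product appearing in the numerator of $\cos\angle(Su,Sv)$, and one for the product of sketched norms in its denominator — and then to combine them. Since both cosines are invariant under positive rescaling of $u$ and $v$, I would first assume without loss of generality that $\|u\|_2=\|v\|_2=1$, so that $\cos\angle(u,v)=u^\T v$ and
\[
\cos\angle(Su,Sv)=\frac{u^\T S^\T S v}{\|Su\|_2\,\|Sv\|_2}.
\]
The problem then decouples into controlling $u^\T S^\T S v$ and $\|Su\|_2\|Sv\|_2$ separately, each directly from the embedding property \eqref{eq:def_subspace_emb}.

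For the denominator, applying \eqref{eq:def_subspace_emb} to $u$ and to $v$ individually gives $\|Su\|_2^2,\|Sv\|_2^2\in[1-\varepsilon,1+\varepsilon]$, hence $\|Su\|_2\|Sv\|_2\in[1-\varepsilon,1+\varepsilon]$. For the numerator, the natural tool is polarization: from $4\,u^\T S^\T S v=\|S(u+v)\|_2^2-\|S(u-v)\|_2^2$, applying \eqref{eq:def_subspace_emb} to the vectors $u\pm v$ (which lie in the embedded subspace) and using $\|u\pm v\|_2^2=2\pm 2\,u^\T v$, a short computation yields
\[
\cos\angle(u,v)-\varepsilon\le u^\T S^\T S v\le \cos\angle(u,v)+\varepsilon.
\]
Equivalently and more transparently, one may read the same bound off \eqref{eq:normS}: the symmetric matrix $E:=I-S^\T S$ satisfies $|w^\T E w|\le\varepsilon\|w\|_2^2$ on the relevant subspace, so its operator norm there is at most $\varepsilon$, whence $|u^\T E v|\le\varepsilon\|u\|_2\|v\|_2=\varepsilon$ by Cauchy--Schwarz, i.e. $|u^\T S^\T S v-\cos\angle(u,v)|\le\varepsilon$.

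It then remains to combine the two estimates. For the upper bound I would pair the largest admissible numerator, $\cos\angle(u,v)+\varepsilon$, with the smallest admissible denominator, $1-\varepsilon$, and symmetrically pair $\cos\angle(u,v)-\varepsilon$ with $1+\varepsilon$ for the lower bound, producing exactly the two claimed fractions. The delicate point — and the step I expect to be the main obstacle — is justifying this pairing, because the ratio $n/d$ is increasing in $d$ when the numerator $n=u^\T S^\T S v$ is negative and decreasing when it is positive, so the ``largest numerator over smallest denominator'' pairing is only unconditionally valid once the sign of $n$ is fixed. In the generic near-orthogonal or positively aligned regime ($\cos\angle(u,v)\ge 0$, hence $n\ge 0$ after the perturbation) the combination is immediate. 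The negatively aligned regime is genuinely subtle: treating numerator and denominator as if they ranged independently over their extreme values is too crude, and one must instead exploit that the single perturbation $E$ simultaneously governs $u^\T E u$, $v^\T E v$ and $u^\T E v$, so that the denominator cannot sit at its unconditional extreme while the numerator sits at its own. Handling this coupling carefully — rather than bounding the two pieces in isolation — is where the real work of the proof lies, and it is the point I would scrutinize most closely.
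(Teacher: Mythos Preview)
The paper does not prove this lemma; it is quoted from \cite{burke2023gmres}, so there is no in-paper argument to compare your proposal against. Your normalization, the polarization (equivalently, the operator-norm bound on $E=I-S^\T S$ restricted to $\mathrm{span}\{u,v\}$) for the numerator, and the direct embedding bounds on the denominator are the standard route, and they deliver both inequalities whenever the sign of $\cos\angle(u,v)\pm\varepsilon$ makes the ``extreme numerator over extreme denominator'' pairing monotone. In particular they cover the only application the paper makes of the lemma --- deducing $|\cos\angle(p_i,p_j)|\le\varepsilon$ from $(Sp_i)^\T(Sp_j)=0$ --- which in fact follows from your numerator estimate $|u^\T S^\T Sv-u^\T v|\le\varepsilon\|u\|\,\|v\|$ alone, without ever touching the denominator.

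Your instinct that the strongly negative regime is delicate is right, but the hope that the coupling between numerator and denominator can be exploited to close the gap is misplaced: the two-sided bound as written can \emph{fail} there, so the obstacle is a defect of the stated inequality, not of your argument. For $\varepsilon=\tfrac12$ and unit vectors with $u^\T v=-0.9$, choose the restriction of $S^\T S$ to $\mathrm{span}\{u,v\}$ so that the sketched Gram matrix in the basis $\{u,v\}$ is $\left(\begin{smallmatrix}0.7&-0.55\\-0.55&0.7\end{smallmatrix}\right)$; the generalized eigenvalues of $(G_S,G)$ are $1.5$ and $\approx 0.658$, hence \eqref{eq:def_subspace_emb} holds on the whole span, yet $\cos\angle(Su,Sv)=-0.55/0.7\approx-0.786$ exceeds the claimed upper bound $(-0.9+0.5)/0.5=-0.8$. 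Replacing $G_S$ by $\left(\begin{smallmatrix}1.2&-1.15\\-1.15&1.2\end{smallmatrix}\right)$ (generalized eigenvalues $\approx 1.237$ and $0.5$) gives $\cos\angle(Su,Sv)\approx-0.958$, below the claimed lower bound $-1.4/1.5\approx-0.933$. So the ``real work'' you anticipated cannot be completed in this generality; a sign restriction or a symmetric adjustment of the denominators is needed.
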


Now, let $P=[p_1, \ldots, p_n]$ be
such that $(Sp_i)^\T (Sp_j)=\delta_{ij}$, where $\delta_{ij}$ denotes the Kronecker delta.
Then Lemma~\ref{lem:Sangle} implies
that 
$$
i\ne j,\,\, (Sp_i)^\T (Sp_j)=0 \,\, \Rightarrow \,\,
|\cos\angle(p_i,p_j)|\le \varepsilon .
$$
Therefore, the value $\varepsilon$ provides an upper bound also on the distortion of the cosine of the angle between two 
columns of an $S^\T S$-orthogonal matrix. 
More precise bounds on the distance from Euclidean orthogonality are given
in section~\ref{sec:distorth}.
%This can help in achieving more insights about the geometric interpretation of sketching techniques. 
%{\color{red}To the best of our knowledge, this result appears to be new.}
%
To this end, in the following section we discuss 
the $S^\T S$-SVD factorization, which will be used as a strategic tool 
for measuring the distance between subspaces, in agreement with the role of the
standard SVD \cite[Section 2.5]{Golub.VLoan.13}.
%that will be useful also to bound the distance from (Frobenius and Euclidean)
%orthogonality of $S^\T S$-orthogonal matrices.

 %%%%%%%%%%%%%%%%%%%%%%%%%%%%%%%%%%%%%%
 \section{The $S^{\top} S$-SVD}\label{SS-SVD}
In~\cite[Theorem 3]{VanLoan76}, Van Loan derived a generalization of the SVD of a given matrix in 
terms of nonstandard inner products. %We specialize that definition to our context.

\begin{definition}[{\cite[Definition 3]{VanLoan76}}]\label{Def:SSsingularvalues}
{%\color{red}
 Given $A\in\mathbb{R}^{m\times n}$, $m\geq n$, and a symmetric and
positive definite matrix ${\cal H} \in {\mathbb R}^{m\times m}$,
the ${\cal H},2$-singular values of $A$ are the elements of the following set
% an $\varepsilon$-subspace embedding $S\in\mathbb{R}^{s\times m}$ of $\text{Range}(A)$, 
%the $S^\T S,2$-singular values of $A$ are the elements of the following set
 %%
 \begin{equation}\label{eq:SSsingular_values_def}
  \mu_{{\cal H},2}(A):=\left\{\mu\geq 0 \text{ s. t. } \mu 
\text{ is a stationary {\color{black}value} of }\; \frac{\|Ax\|_{\cal H}}{\|x\|_2},\; x\neq0\right\}.
%  \mu_{S^\T S,2}(A):=\left\{\mu\geq 0 \text{ s. t. } \mu \text{ is a stationary point of }\; \frac{\|Ax\|_{S^\T S}}{\|x\|_2},\; x\neq0\right\}.
 \end{equation}
}
\end{definition}

%For the sake of brevity, in the following we will talk about $S^\T S$ quantities, in place of $S^\T S,2$, omitting the dependency on the 2-norm. 
%For instance, we write $\mu_{S^\T S}(A)=\mu_{S^\T S,2}(A)$.

%{\color{red}Explain this def in the context of Van Loan's theorem. DP. at this point I'd delete this definition and leave proposition 1 below.}

In our context, $S^\T S$ is not positive definite from a deterministic viewpoint.
Nonetheless, we can still formalize an SVD where the left singular vectors
are $S^\T S$-orthogonal{\color{black}, and the factorization
holds} with high probability.

%{\color{red}VAL: We should refer to \cite[Th.3]{VanLoan76},
%from which Th.1 seems to trivially follow, under the new hypothesis that the
%inner product matrix is  semidef.}

\iffalse
\begin{theorem}\label{th:SSsvd}
%\begin{theorem}[{\color{red}$S^\T S$-SVD~\cite[Theorem 3]{VanLoan76}}]\label{th:SSsvd}
 Given $A\in\mathbb{R}^{m\times n}$, $m\geq n$, and a {\color{black}
 randomized 
$(\varepsilon,\delta,\text{rank}(A))$}-subspace embedding $S\in\mathbb{R}^{s\times m}$ 
of $\text{Range}(A)$, $s\geq\text{rank}(A)$,
there exist\st{, with high probability,} an $S^\T S$-orthogonal matrix
$W\in\mathbb{R}^{m\times r}$, an
 orthogonal matrix $V\in\mathbb{R}^{n\times r}$, $r=\min\{s,n\}$, such that
$A$ can be written {\color{red}with high probability} as
 %%
 \begin{equation}\label{eq:SS-SVD}
 A=W\Theta V^\T, \quad \Theta=\begin{bmatrix}
       \theta_1&&\\
       &\ddots&\\
       &&\theta_r\\
       \end{bmatrix}\in\mathbb{R}^{r\times r},\quad \theta_1\geq\ldots\geq\theta_r\geq 0 .
\end{equation}
%
%%
%where $\mu_{S^\T S}(A)=\{\theta_1,\ldots,\theta_n\}$.
\end{theorem}
\fi

\begin{theorem}\label{th:SSsvd}
{\color{black}
%\begin{theorem}[{\color{red}$S^\T S$-SVD~\cite[Theorem 3]{VanLoan76}}]\label{th:SSsvd}
 Let $A\in\mathbb{R}^{m\times n}$, $m\geq n$, and let $S\in\mathbb{R}^{s\times m}$ be a {randomized 
$(\varepsilon,\delta,\text{rank}(A))$}-subspace embedding %$S\in\mathbb{R}^{s\times m}$ 
of $\text{Range}(A)$, $s\geq\text{rank}(A)$. Then
there exist an $S^\T S$-orthogonal matrix
$W\in\mathbb{R}^{m\times r}$, an
 orthogonal matrix $V\in\mathbb{R}^{n\times r}$ with $r=\min\{s,n\}$, and a diagonal matrix $\Theta={\rm diag}(\theta_1, \ldots, \theta_r)$ with  $\theta_1\geq\ldots\geq\theta_r\geq 0$ such that 
$A$ can be written  as 
\begin{equation}\label{eq:SS-SVD}
 A=W\Theta V^\T ,
\end{equation}
where the equality holds with probability $1-\delta$.
%$A=W\Theta V^\T$.
 %%
 %\begin{equation}\label{eq:SS-SVD}
 %A=W\Theta V^\T, \quad \Theta=\begin{bmatrix}
 %      \theta_1&&\\
 %      &\ddots&\\
 %      &&\theta_r\\
 %      \end{bmatrix}\in\mathbb{R}^{r\times r},\quad %\theta_1\geq\ldots\geq\theta_r\geq 0 .
%\end{equation}
}
\end{theorem}

%{\color{red}ADD: justification in terms of stationary points?}
{\color{black}
\begin{proof}
    The result directly comes from 
    \cite[Theorem 3]{VanLoan76} as long as~\eqref{eq:def_subspace_emb} holds true. Indeed, in this case $S^\T S$ is positive definite on $\text{Range}(A)$. On the other hand, the property~\eqref{eq:def_subspace_emb} holds with probability $1-\delta$ which means that the same happens for our factorization~\eqref{eq:SS-SVD}. $\hfill \square$    
\end{proof}
The next corollary reports a first characterization of the $S^TS$-singular values $\theta_i$'s; its proof follows the steps of that of
Theorem 3 in \cite{VanLoan76}.

\begin{corollary}\label{prop_stationaryvalues}
    The nonegative scalars $\theta_i$, $i=1,\ldots,r$, in Theorem~\ref{th:SSsvd} are stationary values of the function $\mu(x)=\|Ax\|_{S^\T S}/\|x\|_2$ with $x\neq 0$.
\end{corollary}
\begin{proof}
We consider the equivalent functional 
$\eta(x)=\|Ax\|_{S^\T S}^2$, with the constraint $\|x\|_2=1$, and define
the Lagrangian
$L(x,\lambda)=\|Ax\|_{S^\T S}^2-\lambda(\|x\|_2^2-1)=x^\T A^\T S^\T SAx-\lambda(x^\T x-1)$. Stationary points are determined by taking derivatives, and they are the zeros of 
${\rm det}(A^\T S^\T SA -\mu^2 I)=0$.

Let $SA=U_1\widehat \Theta V^\T$, $U_1\in\mathbb{R}^{s\times r}$, $\widehat \Theta\in\mathbb{R}^{r\times n}$, $V\in\mathbb{R}^{n\times n}$, denote the standard SVD of $SA$. If $s\geq n$, then $r=n$ and $\widehat \Theta=\Theta$. Otherwise, $r=s$ and $\widehat \Theta=[\Theta, 0]$.
Therefore, $A^\T S^\T SA = V \widehat \Theta^2 V^T$,
where $\widehat \Theta^2={\rm diag}(\Theta^2,0)$. Hence
$$
{\rm det}(A^\T S^\T SA -\mu^2 I)=
{\rm det}(\widehat \Theta^2 -\mu^2 I)=\prod_{i=1}^n (\widehat \theta_i^2-\mu^2).
$$
so that the first $r$ roots coincide with the $\theta_i$'s. $\hfill \square$
%
%
%In any case, by performing the change of variable $y=V^\T x$, the eigenvalue problem above can be written as 
%$$\left\{\begin{array}{rlr}
%\widehat\Theta^\T \widehat\Theta y&=&\lambda y,\\
%  y^\T y   &=&1.  \\
%\end{array}\right.$$
%The solutions of this problem are given by the eigenpairs $(\theta_i^2,e_i)$, $i=1,\ldots,r$, and $(0,e_i)$, $i=r+1,\ldots,n$,
%so that the stationary points of $\mu(x)$ are the vectors $x_i=Ve_i$ that achieve the stationary values
%$$\mu(x_i)=\frac{\sqrt{x_i^\T A^\T S^\T SAx_i}}{\sqrt{x_i^\T x_i}}=\sqrt{\theta_i^2}=\theta_i,\quad i=1,\ldots,r, \quad \text{and}\quad \mu(x_i)=0,\quad i=r+1,\ldots,n.$$
\end{proof}
}

{\color{black} Returning to Theorem \ref{th:SSsvd}, we notice} that since $S$ is a subspace embedding for Range($A$), $s$ will always be required
to be larger than the rank of $A$, even if this is less than $n$.
The next remark highlights the implications of possible null $\theta_i$s.

\begin{remark}
 Since $S\in\mathbb{R}^{s\times m}$ is asked to be a {\color{black}randomized $(\varepsilon,\delta,\text{rank}(A))$}-subspace embedding of $\text{Range}(A)$, the sketching dimension $s$ should be selected in terms of $\text{rank}(A)$. For simplicity,  $n${\color{black}, the number of columns of $A$, can be used instead of} $\text{rank}(A)$. While this strategy certainly works, it is important to stress that it can lead to
unnecessarily large values of $s$, especially in case of a 
low-rank $A$, which can significantly hinder the computational performance of the approach.
We refer to Example~\ref{ex:1.1bis} below for an example with a low-rank square matrix.
\end{remark}

\vskip 0.1in
The factors in the $S^\T S$-SVD of Theorem~\ref{th:SSsvd}
 can be computed by means of the following steps:

\vskip 0.1in
\begin{enumerate}
\item Compute the SVD of $SA=U_1 \Theta V^\T$
\vskip 0.1in
\item Define $W:=A(\Theta V^\T)^{\dagger}$
\end{enumerate}
\vskip 0.1in

{\color{black} Next, a} simple Matlab implementation  for $A$ full rank and equivalent to the procedure above is reported.

\vskip 0.1in
\begin{verbatim}
       [~,R]=qr(S*A,0);                           
       [~,Theta,V]=svd(R);                          
       W=A*(V/Theta);                             
\end{verbatim}
\vskip 0.1in

Notice that in the {\color{black} commands} above the only operations whose cost depends on the dimensionality $m$ of the problem are the application of the sketching $SA$ and the {\color{black}retrieval} of the left singular vectors $W=AV\Theta^{\dagger}$. Both these steps can take advantage of the possible sparsity of $A$.
In all reported experiments, we computed the $S^\T S$-SVD by the above commands.

This procedure {\color{black} is closely related} to the approach derived
in~\cite{Gilbertetal.14}, where, however, 
 the authors restricted the use of the term ``sketched SVD'' of $A$ to
 the SVD of $SA\in\mathbb{R}^{s\times n}$. 
%This shares some features with our $S^\T S$-SVD. In particular, the $S^\T S$-singular values $\theta_j$'s of $A$ and the (standard) singular values of $SA$ are the same. See also Theorem~\ref{Th:boundsingularvalues}. 
%The left singular matrix $U_1$ of $SA$ has $s$ rows that have a
% difficult interpretation. 
 {\color{black} With our approach, instead, the left singular
 matrix $W$ yields an $S^\T S$-orthogonal basis for the range of $A$.
 %We overcome this issue by directly working with $W$,
%adopting the concept of $S^\T S$-orthogonality\todo{needs some rewording}.
We thus refrain from using the adjective ``sketched'' 
for~\eqref{eq:SS-SVD}, as the latter refers to a
decomposition of the whole matrix $A$.}
%, which seems to imply 
%some sort of inexactness. The $S^\T S$-SVD is an \emph{exact} factorization of $A${\color{red}, with high probability}.

\begin{remark}\label{rem:invariance}
%We explicitly observe that 
{\color{black}If}
 $W$ is $S^\T S$-orthogonal and $U$ is square 
in the Euclidean sense, then $WU$ is still $S^\T S$-orthogonal.
The same holds if $U$ is tall and orthogonal, {\color{black}although}
$S^\T S$-orthogonality will be in terms of the number of columns of $U$.
\end{remark}

% stemming from Definition~\ref{Def:SSsingularvalues}.
%{\color{magenta}VS:The def uses an spd matrix, here we need to introduce
%the ``high probability'' of a semidef matrix, so it is not the same def.}

\begin{remark}
The $S^\T S$-SVD of $A$ can also be determined by means of the randomized 
QR factorization of section~\ref{Subspace embeddings and randomized QR}:
compute
$A=QR$, with $Q^\T S^\T S Q=I$. % and $R$ upper triangular.
Then, compute the standard SVD of $R$,
namely $R=U\Theta V^\T$, so that the $S^\T S$-SVD of $A$ is given by
$$A=W\Theta V^\T,\quad W:=QU.$$
{\color{black}Using} the randomized QR, this implementation of the $S^\T S$-SVD may turn out to be more robust than the one illustrated above {\color{black}which could be prone to numerical instabilities due to the computation of $\Theta^\dagger$}. On the other hand, the former is in general more expensive due to the computation of $Q$.
We also stress the \emph{single-pass} nature of this implementation of the $S^\T S$-SVD, which
allows us to access $A$ only once, as required by certain
data streaming models. At the same time, the $S^\T S$-SVD  is still able to compute a decomposition with ($S^\T S$-)orthogonal factors, unlike other state-of-the-art 
randomized methods; see, e.g., the discussion in ~\cite[section 5.5]{Halko2010}.
\end{remark}

%\begin{remark}
By employing the $S^\T S$-SVD we can provide alternative definitions of the $S^\T S$-matrix norms in Definition~\ref{Def:SSnorms}, that is
$$\SF{A}^2=\sum_{i=1}^{n}\theta_i^2,\quad\text{and}\quad \Sdue{A}=\theta_1.$$
% \end{remark}

%\begin{definition}
%Given $A\in\mathbb{R}^{m\times n}$, $m\geq n$, if $\theta_i,\ldots,\theta_n$ are its sketched singular values, then the sketched norm of $A$ is given by
%%
%$$\|A\|_{S^\T S}^2:=\sum_{i=1}^{n}\theta_i^2=\text{trace}(A^T S^T S).$$
%%
%\end{definition}

As its standard counterpart, also the $S^\T S$-SVD and the $S^\T S$-singular values $\theta_i$'s fulfill certain optimality conditions as long as the latter are formulated in the right norm i.e., the $S^\T S$-norm.

\begin{theorem}\label{th:SVDbounds}
Let $A=W\Theta V^\T$ 
with $\Theta=\text{diag}(\theta_1,\ldots,\theta_n)$ 
be the $S^\T S$-SVD of $A\in\mathbb{R}^{m\times n}$, $m\geq n$. 
Let $W_k\in\mathbb{R}^{m\times k}$, $V_k\in\mathbb{R}^{n\times k}$ collect the 
first $k$ columns of $W$ and $V$, respectively, and 
$\Theta_k$ be the square top left leading part of $\Theta$.
Let $S\in\mathbb{R}^{s\times m}$ be a {\color{black}randomized $(\varepsilon,\delta,n)$}-subspace embedding of any subspace
of ${\mathbb R}^{m}$ of dimension $n$.
Then with high probability
\begin{equation}\label{eq:truncatedSVD}
 \min_{\substack{B\in\mathbb{R}^{m\times n}\\ \text{rank}(B)=k}}
\SF{A-B}^2={\sum_{i=k+1}^n\theta_{i}^2},\qquad
W_k\Theta_k V_k^\T =
\argmin_{\substack{B\in\mathbb{R}^{m\times n}\\ \text{rank}(B)=k}}
\SF{A-B} ,
\end{equation}
\begin{equation}\label{eq:truncatedSVD2}
 \min_{\substack{B\in\mathbb{R}^{m\times n}\\ \text{rank}(B)=k}}
\Sdue{A-B}^2={\theta_{k+1}},\qquad
W_k\Theta_k V_k^\T =
\argmin_{\substack{B\in\mathbb{R}^{m\times n}\\ \text{rank}(B)=k}}
\Sdue{A-B} ,
\end{equation}
and
\begin{equation}\label{eq:optimaltheta}
 \theta_k=\max_{\mathcal{U}, \text{dim}(\mathcal{U})=k} \;\min_{\substack{x\in\mathcal{U}\\ \|x\|_2=1}}\|Ax\|_{S^\T S}=\min_{\mathcal{U}, \text{dim}(\mathcal{U})=n-k+1} \;\max_{\substack{x\in\mathcal{U}\\ \|x\|_2=1}}\|Ax\|_{S^\T S}.
\end{equation}
\end{theorem}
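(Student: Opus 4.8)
The plan is to reduce every claim to the corresponding classical statement for the \emph{standard} SVD of the sketched matrix $SA$. The single identity that drives everything is that, for any matrix $M$ of compatible size, $\SF{M}=\|SM\|_F$, $\Sdue{M}=\|SM\|_2$, and $\|Mx\|_{S^\T S}=\|SMx\|_2$, all immediate from the definitions. First I would record that, since Theorem~\ref{th:SSsvd} gives $W^\T S^\T S W=I$, the columns of $SW$ are Euclidean-orthonormal, so $SA=(SW)\Theta V^\T$ is an honest SVD of $SA$: $V$ is orthogonal, $\Theta$ is diagonal with nonincreasing nonnegative entries, and $SW$ has orthonormal columns. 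Consequently the $\theta_i$'s are exactly the standard singular values of $SA$, and $SW_k$, $\Theta_k$, $V_k$ form the rank-$k$ truncated SVD of $SA$.

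For the Eckart--Young statements \eqref{eq:truncatedSVD}--\eqref{eq:truncatedSVD2} I would match lower and upper bounds. For the lower bound, note that for any $B$ with $\text{rank}(B)=k$ the matrix $SB$ has rank at most $k$, whence
$$
\SF{A-B}=\|SA-SB\|_F\ \ge\ \min_{\text{rank}(C)\le k}\|SA-C\|_F=\Big(\sum_{i=k+1}^n\theta_i^2\Big)^{1/2},
$$
the last equality being the classical Eckart--Young theorem applied to $SA$. For attainment I would plug in the candidate $B=W_k\Theta_k V_k^\T$: then $SB=(SW_k)\Theta_k V_k^\T$ is precisely the rank-$k$ truncated SVD of $SA$, so $\SF{A-B}^2=\sum_{i=k+1}^n\theta_i^2$ and the bound is met, identifying the minimizer. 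The spectral version \eqref{eq:truncatedSVD2} is identical with $\|\cdot\|_2$ replacing $\|\cdot\|_F$ and the spectral Eckart--Young bound. One bookkeeping point to verify is that the proposed minimizer really has rank $k$: since $SW_k$ has orthonormal columns it has full column rank $k$, forcing $W_k$ to have rank $k$, and with $\Theta_k$ invertible and $V_k$ of full column rank the product $W_k\Theta_k V_k^\T$ has rank $k$ (the degenerate case $\theta_k=0$ collapses the problem and should be noted separately).

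The minimax identity \eqref{eq:optimaltheta} is the most direct: using $\|Ax\|_{S^\T S}=\|SAx\|_2$, both the max--min and the min--max expressions coincide with the classical Courant--Fischer characterizations of the $k$-th singular value of $SA$, which equals $\theta_k$ by the first step; no work is needed beyond quoting Courant--Fischer for $SA$.

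The genuinely substantive point---and the only place where the sketching really enters---is the rank bookkeeping in the lower bound: the optimization ranges over rank-$k$ matrices $B$ in the ambient $\mathbb{R}^{m\times n}$, whereas the classical result lives in the sketched space $\mathbb{R}^{s\times n}$. The two agree precisely because $M\mapsto SM$ cannot increase rank, so the ambient minimum is bounded below by the sketched one, and the truncated $S^\T S$-SVD realizes it. Finally, the qualifier ``with high probability'' is inherited entirely from the existence of the $S^\T S$-SVD in Theorem~\ref{th:SSsvd} and from $\SF{\cdot}$ and $\Sdue{\cdot}$ being genuine norms on the ranges of all competitors $A-B$ (of dimension at most $n$); this is exactly why $S$ is assumed to embed \emph{every} $n$-dimensional subspace rather than merely $\text{Range}(A)$.
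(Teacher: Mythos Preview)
Your argument is correct. The paper's own proof is a one-line pointer: ``mimic the proofs of the corresponding results for the standard SVD, using that $W^\T S^\T S W=I$.'' That is, the intended route is to redo the Eckart--Young and Courant--Fischer arguments inside the $S^\T S$-inner product.

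You take a genuinely different and somewhat cleaner route: rather than re-proving anything, you reduce to the \emph{classical} results for $SA$ via the identities $\|M\|_{S,*}=\|SM\|_*$ and $\|Ax\|_{S^\T S}=\|SAx\|_2$, together with the observation that $(SW)\Theta V^\T$ is an ordinary (thin) SVD of $SA$. The lower bound then falls out from rank monotonicity under left multiplication ($\text{rank}(SB)\le\text{rank}(B)$) and the standard Eckart--Young/Mirsky bound for $SA$, while attainment is immediate because $S$ applied to your candidate is exactly the truncated SVD of $SA$; the minimax identity is literally Courant--Fischer for $SA$. This buys you a shorter argument that invokes rather than imitates the classical theorems, and it makes transparent that the $\theta_i$ are the ordinary singular values of $SA$---a fact the paper uses elsewhere. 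The paper's ``mimic'' approach, by contrast, emphasizes that the $S^\T S$-geometry behaves formally like the Euclidean one, which is pedagogically useful but more work to spell out. Your remark that the candidate $W_k\Theta_k V_k^\T$ has rank exactly $k$ (when $\theta_k>0$) and your explanation of why the hypothesis covers \emph{all} $n$-dimensional subspaces (so that $\|\cdot\|_{S,*}$ is a genuine norm on each $A-B$) are both points the paper leaves implicit.
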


\begin{proof}
All these optimality results can be shown by mimicking the proofs of the corresponding results for standard SVD and recalling that the left $S^\T S$-singular vectors are $S^\T S$-orthogonal.$\hfill \square$

\end{proof}

In the analysis that follows, it will be important to relate the $S^\T S$-singular values 
of $A$ to their standard counterparts. 
%From the definition of the set~\eqref{eq:SSsingular_values_def}, we can write
%%
%$$\mu_{S^T S}(A)=\left\{\mu\geq 0 \text{ s. t. } \mu \text{ is a stationary point of }\; \|SAx\|_2,\; \|x\|_2=1\right\}.$$
%%
Since $S$ is a {\color{black}randomized $(\varepsilon,\delta,\text{rank}(A))$}-subspace embedding for $\text{Range}(A)$, we know that
$$(1-\varepsilon)\|Ax\|_2^2\leq\|SAx\|_2^2\leq(1+\varepsilon)\|Ax\|_2^2,$$
so that
$$(1-\varepsilon)\sigma_n^2\leq\|SAx\|^2_2\leq(1+\varepsilon)\sigma_1^2,$$
where $\sigma_1$ and $\sigma_n$ are the largest and smallest (standard) singular values of $A$, respectively. 
This means that all the $S^\T S$-singular values of $A$ are included in the interval 
$[\sqrt{1-\varepsilon}\cdot\sigma_n,\sqrt{1+\varepsilon}\cdot\sigma_1]$.
More accurate bounds can be obtained by monitoring the change {\color{black} in} each singular value.
Such an analysis, together with {\color{black}the} corresponding bounds for the right singular vectors
was carried out in \cite{Gilbertetal.14}, and since then it has been rediscovered a few times, 
also with different proofs, 
% The proof in the latter manuscript can be significantly reduced by employing the optimality of the $S^T S$-singular values~\eqref{eq:optimaltheta} and the properties of
%$\varepsilon$-subspace embeddings as done 
see, e.g., ~\cite[Theorem 2.2]{RandrankrevealingQR}.

\begin{theorem}{\rm (\cite{Gilbertetal.14})} \label{Th:boundsingularvalues} 
Let $S$ be a {\color{black} randomized $(\varepsilon,\delta,\text{rank}(A))$}-subspace embedding for Range($A$), and
 let $A=W\Theta V^\T$ be the $S^\T S$-SVD 
of $A\in\mathbb{R}^{m\times n}$, $m\geq n$, 
and $A=U\Sigma Y^\T$, $\Sigma=\text{diag}(\sigma_1,\ldots,\sigma_n)$,
its standard SVD. Then with high probability, 
 \begin{equation}\label{eq:boundsingularvalues}
\sqrt{1-\varepsilon}\cdot \sigma_k
\leq
\theta_k
\leq\sqrt{1+\varepsilon}\cdot\sigma_k,\quad\text{for all }k=1,\ldots,n.
 \end{equation}
\end{theorem}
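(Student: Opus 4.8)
The plan is to combine the variational (Courant--Fischer) characterization of both families of singular values with the observation that the embedding compares the two relevant norms \emph{pointwise}. First I would rewrite the $S^\T S$-norm as $\|Ax\|_{S^\T S}^2=(Ax)^\T S^\T S(Ax)=\|SAx\|_2^2$, and note that $Ax\in\text{Range}(A)$ for every $x\in\mathbb{R}^n$. Since $S$ is an $\varepsilon$-subspace embedding of $\text{Range}(A)$, the bound~\eqref{eq:def_subspace_emb} holds simultaneously for all vectors of this subspace with high probability, so that, taking square roots,
$$
\sqrt{1-\varepsilon}\,\|Ax\|_2 \;\le\; \|Ax\|_{S^\T S} \;\le\; \sqrt{1+\varepsilon}\,\|Ax\|_2,
\qquad\text{for all }x\in\mathbb{R}^n.
$$

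Next I would invoke the Courant--Fischer theorem for the standard singular values,
$$
\sigma_k=\max_{\mathcal{U},\,\text{dim}(\mathcal{U})=k}\;\min_{\substack{x\in\mathcal{U}\\ \|x\|_2=1}}\|Ax\|_2,
$$
together with its $S^\T S$-counterpart~\eqref{eq:optimaltheta} for $\theta_k$. The key point is that the two characterizations range over exactly the same $k$-dimensional subspaces $\mathcal{U}\subseteq\mathbb{R}^n$ and differ only in the norm used to measure $Ax$. The final step is to push the pointwise two-sided inequality through the nested optimization: because $\sqrt{1\pm\varepsilon}$ are positive constants, they factor out of both the inner minimum and the outer maximum, and monotonicity of $\min$ and $\max$ under pointwise domination gives, for each fixed $\mathcal{U}$, $\min_{x\in\mathcal{U}}\|Ax\|_{S^\T S}\le\sqrt{1+\varepsilon}\,\min_{x\in\mathcal{U}}\|Ax\|_2$ and the analogous lower bound; maximizing over $\mathcal{U}$ then yields $\sqrt{1-\varepsilon}\,\sigma_k\le\theta_k\le\sqrt{1+\varepsilon}\,\sigma_k$ for every $k$.

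The only delicate point is that the sandwiching of the norms must hold \emph{uniformly} in $x$ under a single favorable event, rather than separately for each $x$ (which would require an intractable union bound over infinitely many directions). This is exactly what the oblivious subspace embedding guarantees once it is applied to the entire range of $A$, so no further probabilistic argument is needed. As an alternative route I would mention that, by the construction of the $S^\T S$-SVD, the $\theta_k$ are precisely the ordinary singular values of $SA$; the same Courant--Fischer comparison can then be applied directly to $SA$ and $A$, giving an entirely equivalent derivation.
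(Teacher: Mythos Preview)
Your argument is correct. The paper itself does not supply a proof of Theorem~\ref{Th:boundsingularvalues}; it attributes the result to~\cite{Gilbertetal.14} and remarks that it has been rediscovered several times with different proofs. Your Courant--Fischer approach is the natural one and, fittingly, leans on the variational characterization~\eqref{eq:optimaltheta} that the paper establishes in Theorem~\ref{th:SVDbounds}. The alternative you mention---viewing the $\theta_k$ as the ordinary singular values of $SA$ and comparing $\|SAx\|_2$ with $\|Ax\|_2$ via Courant--Fischer---is equally valid and is in fact how the paper motivates the theorem in the paragraph preceding it (there only the crude bound $\sqrt{1-\varepsilon}\,\sigma_n\le\theta_k\le\sqrt{1+\varepsilon}\,\sigma_1$ is written out, leaving the per-index refinement to the cited reference). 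Your care about the uniformity of the embedding over $\text{Range}(A)$ is exactly the right point to flag; nothing further is needed.
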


%\begin{proof}
 %The proof follows from the optimality of the sketched
%singular values~\eqref{eq:optimaltheta} and %the properties of
%$\epsilon$-subspace embeddings. In particular, it holds
%%
%\begin{align*}
%\theta_k=&\,\max_{\mathcal{U}, \text{dim}(\mathcal{U})=k} \;\min_{\substack{x\in\mathcal{U}\\ \|x\|_2=1}}\|Ax\|_{S^T S}=\max_{\mathcal{U}, \text{dim}(\mathcal{U})=k} \;\min_{\substack{x\in\mathcal{U}\\ \|x\|_2=1}}\|SAx\|_2\\
%\leq&\, (1+\varepsilon)\max_{\mathcal{U}, \text{dim}(\mathcal{U})=k} \;\min_{\substack{x\in\mathcal{U}\\ \|x\|_2=1}}\|Ax\|_2=
%(1+\varepsilon)\sigma_k.
%\end{align*}
%%
%Similarly, $\theta_k\geq(1-\varepsilon)\sigma_k$,
% and the result follows.
%\end{proof}

Theorem~\ref{Th:boundsingularvalues} is an important 
asset in the analysis of sketched numerical procedures. For 
instance, the rank of $A$ can be read by looking at the $S^\T S$-singular values. Indeed, $\theta_k=0$ if and only if $\sigma_k=0$, with high probability.
Moreover,  the \emph{truncated} 
$S^\T S$-SVD $W_k\Theta_kV_k^\T$ in~\eqref{eq:truncatedSVD} satisfies
$$
\SF{A-W_k\Theta_kV_k^\T}^2=
\sum_{i=k+1}^n\theta_i^2\leq (1+\epsilon)\sum_{i=k+1}^n\sigma_i^2,\; \text{and}\; \Sdue{A-W_k\Theta_kV_k^\T}=
\theta_{k+1}\leq \sqrt{1+\epsilon}\cdot\sigma_{k+1}.$$
Therefore, the truncated $S^\T S$-SVD, which is optimal in the 
$S^\T S$-norm, attains an error that with high probability is a small 
multiple of the optimal error in the Frobenius and Euclidean norm. 
%{\color{red}Moreover, the
%construction of the $S^\T S$-SVD shares the computational advantages 
%of using the sketched QR algorithm over its standard counterpart.}\todo{VS:QR adv no longer needed?}
This opens to the design of fast, suboptimal low-rank approximations with 
$S^\T S$-orthogonal factors.

%In the following example we provide a numerical validation of some of the findings presented in this section.
It is also interesting to compare the $S^\T S$-SVD with the 
randomized SVD~\cite{Halko2010}, which relies on a
%The latter does not rely on subspace embeddings but rather on the 
randomized
range finder algorithm in place of subspace embeddings. 
%In particular, 
Given a target rank $k$ and an oversampling parameter $\ell$, the range finder first draws a sketching matrix
$\Omega\in\mathbb{R}^{n\times (k+\ell)}$, which does not necessarily fulfill any subspace embedding properties, and it then constructs a matrix $Q\in\mathbb{R}^{m\times (k+\ell)}$ whose columns form an orthonormal basis of $\text{Range}(A\Omega)$. The SVD-like approximation provided by the randomized SVD algorithm is $\widehat A:=(Q\widehat U)\widehat\Sigma\widehat V^\T\approx A$ where $\widehat U\widehat\Sigma\widehat V^\T$ is the standard SVD of $Q^\T A$. By partitioning the right singular vectors of $A$ as $V=[V_1,V_2]$, $V_1\in\mathbb{R}^{n\times k}$, $V_2\in\mathbb{R}^{n\times (n-k)}$ and by denoting $\Omega_1:=V_1^\T\Omega$, $\Omega_2:=V_2^\T\Omega$, under the 
assumptions $\text{rank}(\Omega_1)=k$ and $\gamma_k:=\sigma_{k+1}/\sigma_k<1$, in~\cite[Theorem 9]{RSVD_singularvalues2} 
it was shown that\footnote{Note that in the original version 
of~\cite[Theorem 9]{RSVD_singularvalues2} the term
$\gamma_j^{4q+2}$ appears in place of $\gamma_j^2$ as we report. The scalar $q$ consists of the number of 
performed subspace iterations. Here 
we assume $q=0$, namely the plain randomized SVD algorithm is adopted.}
$$\frac{\sigma_j}{\sqrt{1+\gamma_j^2\|\Omega_2\Omega_1^\dagger\|_2^2}}\leq\widehat \sigma_j\leq\sigma_j,\quad\text{for all }j=1,\ldots,k,$$
where $\widehat\sigma_j$ denotes the $j$th singular values of $\widehat A$. 
The width of the interval containing the ratio $\widehat \sigma_j/\sigma_j$ depends on the gap between
the corresponding consecutive singular values of $A$ and on the quality of the random matrix $\Omega$.
In our case, the corresponding ratio for
the $S^\T S$-singular value $\theta_j$ lies in the interval $[\sqrt{1-\varepsilon}, \sqrt{1+\varepsilon}]$,
 which is related to the considered probabilistic confidence.
A more detailed comparison is postponed to later research.
% of the 
%It would be interesting to relate the bounds above with the 
%ones in Theorem~\ref{Th:boundsingularvalues}. We leave this fascinating matter to be studied elsewhere.
%}

%%%%%%%%%%%%%%%%%%%%%%%%%%%%
\subsection{On the applicability of the $S^\T S$-{\color{black}SVD}}\label{On the applicability}
In many applications the singular values of the given matrix $A$ 
are characterized by particular decay properties or by the presence
of interval gaps.
%has a significant decay in its singular values as it comes from, e.g., a low-rank model. 
In the latter case, for instance, the gap may correspond to a very
ill-conditioned $A$; truncation procedures have been classically
 adopted to eliminate what can be recognized as noisy data,
see, e.g., \cite{VarahApril1973a}.
%that approximate truncate 
%and it is important to regularize it by cutting off its (very) low frequencies. 
In these scenarios,
% the $S^\T S$-SVD can be helpful. In particular, it is possible to compute only 
the $S^\T S$-singular values $\theta_i$s of $A$ can be computed at low cost, 
and analyzed in place of the original ones.
%and study their behavior. 
Thanks to~\eqref{eq:boundsingularvalues}, the qualitative behavior of the
$\theta_i$s will be the same as that of the singular values $\sigma_i$s of $A$.
% will have the same trend displayed by the $\theta_i$s, quantitatively speaking. 
For example, a distribution gap of the $\sigma_i$s can be captured by inspecting
a similar gap in the $\theta_i$s (see Figure~\ref{fig:SScompare} below), and thus 
determining the number of singular values to be retained.
Such a strategy can be employed to  determine the target rank in 
probabilistic or deterministic procedures that seek a low rank
approximation of $A$ of given rank, such as truncated SVD, CUR, and 
Nystr\"om type approximations \cite{doi:10.1137/140977898},%
\cite{Halko2010},\cite{Martinsson_Tropp_2020}.
Moreover, the $S^\T S$-singular values $\theta_i$s can  
be exploited to determine a priori
the truncation threshold of the solution in regularized 
 linear least squares problems associated with ill-posed inverse problems; see, e.g.,
\cite{Hansen.book.10}.
% we expect $A$ to present a large gap in its $\sigma_i$s but we do not know where this is located, we can extrapolate this information by looking at the $\theta_i$s as they showcase the same gap with high probability.
%Therefore, the $S^TS$-SVD can be employed as a tool to analyze the singular values' trend of a matrix at low cost. This piece of information can be further exploited in computation by performing, e.g., (standard) low-rank approximations or employing low-rank methods where providing the rank of the underlying model, or an upper bound thereof, is crucial.

The next example illustrates that the $S^\T S$-singular values 
are indeed able to 
capture the low numerical rank occurring in the original data.
%encode a gap similar to the one presented by their standard counterparts.

  \begin{example}\label{ex:1.1bis}
{\rm
We consider the Cauchy matrix $C\in\mathbb{R}^{n\times n}$, $n=5\,000$, 
defined entry-wise as $C_{i,j}=1/(x_i+y_j)$ where $x_i$ and $y_j$ are taken 
as the $i$th and $j$th nodes of a grid of $n$ equidistant 
points in $[2,100]$ and $[-1\,000,-500]$, respectively. 
Cauchy matrices present a fast decay in their singular values 
if, e.g., the values $\{x_i\}$ and $\{y_j\}$ come from disjoint 
sets; see, e.g.,~\cite[Section 4.1]{BecT17}. 
Thus, the considered matrix $C$ is numerically low-rank. We show that 
the $S^\T S$-SVD amounts to a practical, efficient way for 
estimating its numerical rank.

As sketching $S\in\mathbb{R}^{s\times m}$ we consider the following subsampled trigonometric transformation
\begin{equation}\label{eq:sketching_trig}
S=  \sqrt{\frac{m}{s}}DFE,
\end{equation}
where $E\in\mathbb{R}^{m\times m}$ is a diagonal matrix with Rademacher entries
(i.e., the diagonal entries are randomly chosen as $\pm 1$ with equal
probability), $D\in\mathbb{R}^{s\times m}$
 contains $s$ randomly selected rows of
the identity matrix, and $F\in\mathbb{R}^{m\times m}$ is the discrete cosine transform.
Due to our rather basic implementation, the cost of applying the matrix $S$ to an $m \times n$ matrix is $\mathcal O(mn \log m)$ operations\footnote{The action of $F$ is computed via the matlab function {\tt dct}.}.

It was shown in \cite{Tropp2011}
 that if $s=\mathcal{O}(\varepsilon^{-2}(k+\log\frac{m}{\delta})\log\frac{k}{\delta})$, then~\eqref{eq:sketching_trig} is an oblivious $\varepsilon$-subspace
embedding for any $k$-dimensional subspace of $\mathbb{R}^m$.
Nonetheless, numerical evidence suggests that selecting the
smaller sketching dimension 
$s=\mathcal{O}(\varepsilon^{-2}\frac{k}{\delta})$ works well in 
practice; see, e.g.,~\cite[Section~9]{Martinsson_Tropp_2020}.

For the given $C$, although it is known that the rank is low,
the actual dimension of $\text{Range}(C)$ is not known a-priori.
We thus select rather moderate values of $s$ and in 
Figure~\ref{fig:SScompare} we report the first 40 singular values
$\sigma_i$s of $C$ computed by {\tt svd} (blue stars), 
the first $\ell:=\min\{s,40\}$ $S^\T S$-singular values $\theta_i$s (red circles),
and the first $\ell$ standard singular values computed by {\tt svds}
(black diamonds) for %different values of $s$: 
$s=30$ (left) and $s=60$ (right).

%{\color{red}VS: i plot in Fig.1 sono molto piccoli, rispetto allo spazio
%che hanno. ridurrei l'asse delle ordinate e mostrerei solo i primi 40 sing.vals.
%l'agenda potrebbe essere piu grande, cosi come i font. (ho provato a caricare
%la figura *.fig ma non mi fa fare editing)}
%

%  \begin{figure}[t]
%  \centering
%  \begin{minipage}{0.45\textwidth}
%  \centering
%  \hspace{-1cm}
%  \includegraphics[scale=.47]{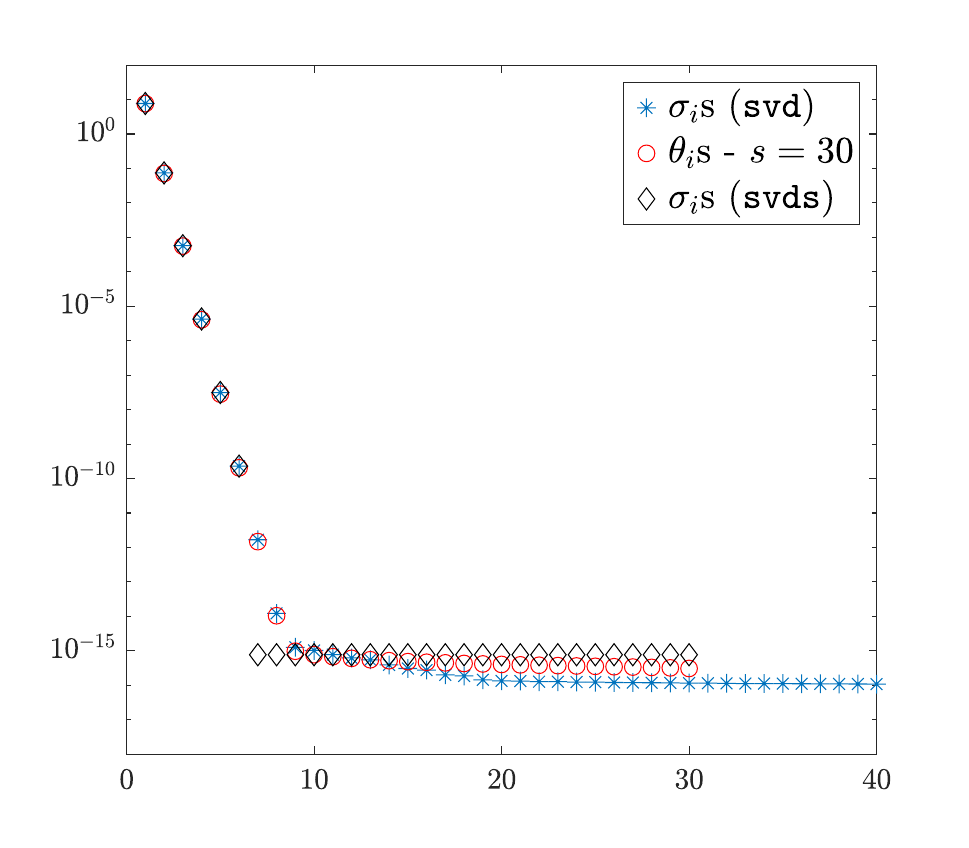}
%  \end{minipage}~\begin{minipage}{0.5\textwidth}
%  \centering
% % \hspace{-.8cm}
%  \includegraphics[scale=.47]{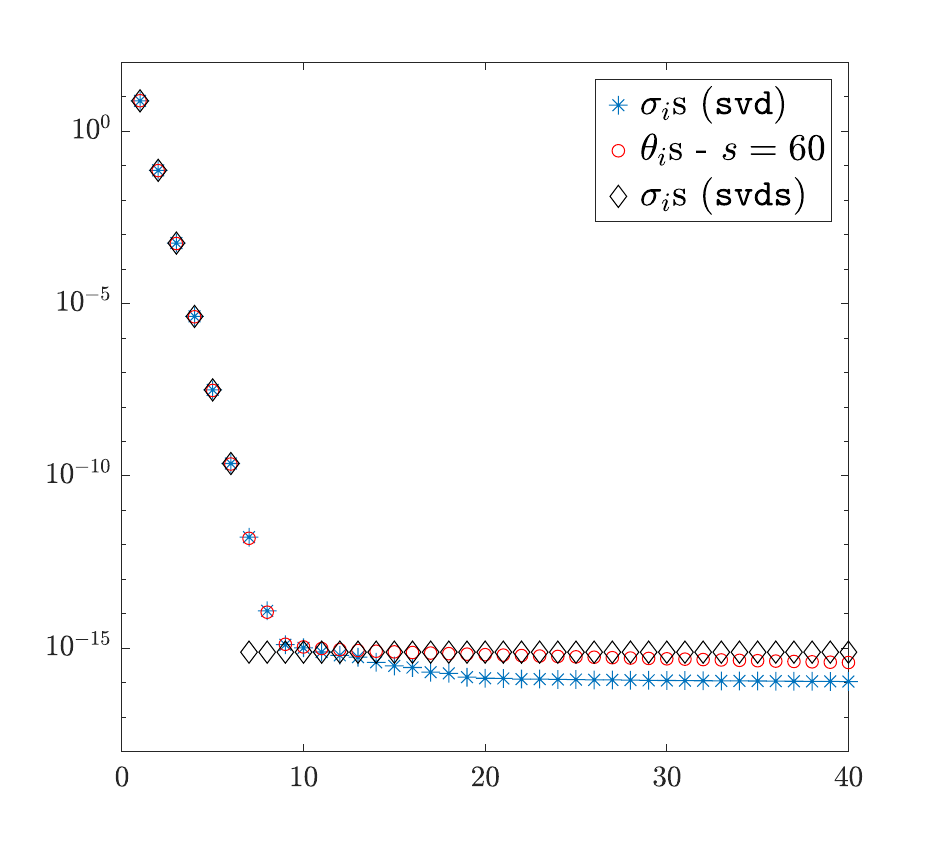}
%\end{minipage}  

  \begin{figure}[t]
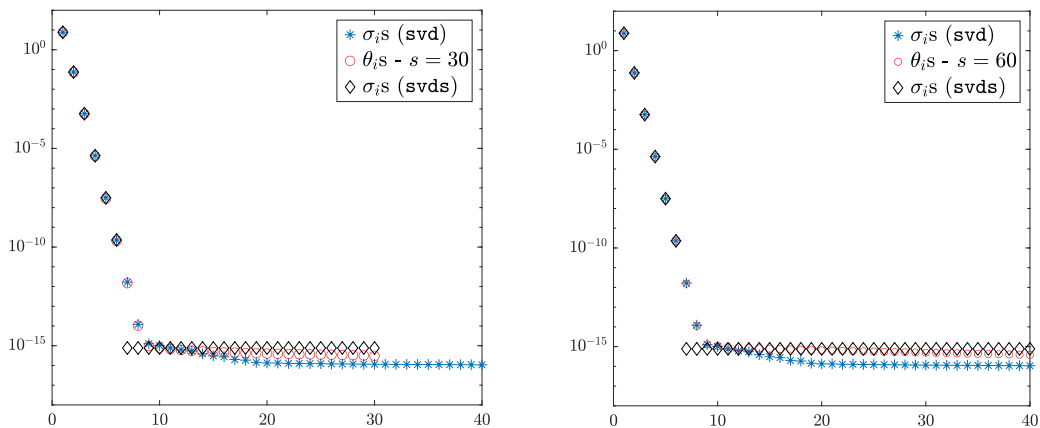

  \centering
%  \begin{minipage}{0.45\textwidth}
%  \centering
%  \hspace{-1cm}
  \includegraphics[scale=.45]{cauchy_ss_5rank.pdf}
%  \end{minipage}~\begin{minipage}{0.5\textwidth}
%  \centering
 % \hspace{-.8cm}
  \includegraphics[scale=.45]{cauchy_ss_10rank.pdf}
%\end{minipage}

   \caption{Example~\ref{ex:1.1bis}.
   First 40 standard singular values $\sigma_i$s computed by {\tt svd} (blue stars), first $\ell$ $S^\T S$-singular values $\theta_i$'s (red circles), and first $\ell$ standard singular values $\sigma_i$s computed by {\tt svds} (black diamond) for different values of the sketching dimension $s$ and $\ell=\min\{s,40\}$. Left: $s=30$. Right: $s=60$. The reported results have been averaged over 50 runs.
   \label{fig:SScompare}}
  \end{figure}

Figure~\ref{fig:SScompare} shows that the $S^\T S$-singular values 
are able to match rather well the true singular values of $A$, 
especially those larger than $10^{-15}$, for both the tested values of $s$. 
The $S^\T S$-SVD thus perfectly encodes the (standard) numerical rank of $C$. 
This does not really happen when running the memory-saving deterministic 
Matlab function
{\tt svds}\footnote{This is still the case when changing the default setting of {\tt svds} to, e.g., {\tt sigma=svds(A,s,'largest','Tolerance', 1e-15,'MaxIterations',2e3)}.}. 
Indeed, this routine is unable to catch $\sigma_7\approx 10^{-12}$ and 
$\sigma_8\approx 10^{-14}$. 
%whose magnitude is $10^{-12}$ and $10^{-14}$, respectively.

The computation of the $S^\T S$-singular values required only
$0.14$ and $0.15$ seconds for $s=30$ and $s=60$, respectively. 
These timings are one order of magnitude smaller than the 
running time devoted to the computation of all the standard 
singular values of\footnote{{\tt sigma=svd(C)}.} $C$, 
which amounts to 3.31 seconds. 
The matlab function {\tt svds} required 1.82 ($s=30$) and 3.6 ($s=60$) seconds 
to compute the first $s$ $\sigma_i$'s. 
In addition to being still larger than the running time required by our 
approach, these numbers show that the cost of {\tt svds} grows linearly with 
the parameter $s$. 
On the other hand, the cost of our approach is practically insensitive to $s$.

  We conclude by mentioning that also the randomized SVD algorithm with a subsampled trigonometric sketching matrix $\Omega\in\mathbb{R}^{n\times (k+\ell)}$, $k=s$, $\ell=5$, is able to match all the first eight singular values of $A$ with running times that are slightly larger than the ones attained by the $S^\T S$-SVD. This is mainly due to the explicit projection $Q^\T A$ performed by the former scheme.
\begin{flushright}
 $\diamond$
\end{flushright}
 }
\end{example}

\section{Orthogonality properties} \label{sec:distorth}
In using the sketched approach, a natural question is how far
 an $S^\T S$-orthogonal matrix is from being orthogonal in
%the closest orthogonal matrix in
the Frobenius and Euclidean sense.
The $S^\T S$-singular values can be employed for these purposes.
As a counterpart result, we will also derive bounds on the distance of standard 
orthogonal matrices from $S^\T S$-orthogonality.

\begin{proposition}\label{prop:orthP}
%Let $P$ solve problem (\ref{eq:main}).
Let $P\in\R^{m\times n}$ have $S^\T S$-orthonormal columns with $S$ being a {\color{black}randomized $(\varepsilon,\delta,\text{rank}(P))$}-subspace embedding for $\text{Range}(P)$.
Then,
with high probability,
$$
\|P^\T P-I\|_F\le \frac{\varepsilon}{1-\varepsilon}{{\sqrt{n}}}, \quad\text{and}\quad
\|P^\T P-I\|_2\le \frac{\varepsilon}{1-\varepsilon}.
$$
\end{proposition}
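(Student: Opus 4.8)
The plan is to route everything through the $S^\T S$-SVD of $P$ and Theorem~\ref{Th:boundsingularvalues}, which converts the nonstandard orthogonality condition into a clean statement about the (standard) singular values of $P$. First I would write the $S^\T S$-SVD $P = W\Theta V^\T$ with $W^\T S^\T S W = I$, $V$ orthogonal, and $\Theta = \mathrm{diag}(\theta_1,\dots,\theta_n)$. Feeding this into the hypothesis $P^\T S^\T S P = I$ gives
$$
I = P^\T S^\T S P = V\Theta\,(W^\T S^\T S W)\,\Theta V^\T = V\Theta^2 V^\T,
$$
so $\Theta^2 = I$, i.e.\ all the $S^\T S$-singular values of $P$ equal $1$. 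This is the conceptual crux: $S^\T S$-orthonormality of the columns is exactly the statement $\theta_k \equiv 1$.

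Next I would transfer this to the standard singular values $\sigma_k$ of $P$. Since $\theta_k = 1$ for every $k$, Theorem~\ref{Th:boundsingularvalues} (applied with $S$ an $\varepsilon$-subspace embedding for $\mathrm{Range}(P)$) yields, with high probability,
$$
\frac{1}{\sqrt{1+\varepsilon}} \le \sigma_k \le \frac{1}{\sqrt{1-\varepsilon}}, \qquad \text{hence} \qquad \frac{1}{1+\varepsilon} \le \sigma_k^2 \le \frac{1}{1-\varepsilon}.
$$
Writing the standard SVD $P = U\Sigma Y^\T$ shows $P^\T P - I = Y(\Sigma^2 - I)Y^\T$, so the eigenvalues of the symmetric matrix $P^\T P - I$ are precisely $\sigma_k^2 - 1$, and both norms in the statement reduce to functions of these quantities.

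To finish, I would bound $|\sigma_k^2 - 1|$. Subtracting $1$ from the displayed interval gives
$$
-\frac{\varepsilon}{1+\varepsilon} \le \sigma_k^2 - 1 \le \frac{\varepsilon}{1-\varepsilon},
$$
and since $\tfrac{\varepsilon}{1+\varepsilon} \le \tfrac{\varepsilon}{1-\varepsilon}$ the right endpoint dominates, so $|\sigma_k^2 - 1| \le \tfrac{\varepsilon}{1-\varepsilon}$ for all $k$. The spectral bound is then immediate, $\|P^\T P - I\|_2 = \max_k |\sigma_k^2 - 1| \le \tfrac{\varepsilon}{1-\varepsilon}$, and summing the squares gives $\|P^\T P - I\|_F^2 = \sum_{k=1}^n (\sigma_k^2 - 1)^2 \le n\,(\tfrac{\varepsilon}{1-\varepsilon})^2$, i.e.\ the claimed $\sqrt n$ factor. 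The only subtlety I expect is the asymmetry of the two-sided bound on $\sigma_k^2 - 1$: one must check that the larger endpoint $\tfrac{\varepsilon}{1-\varepsilon}$ governs the estimate rather than $\tfrac{\varepsilon}{1+\varepsilon}$, which is what pins down the exact constant in the proposition.
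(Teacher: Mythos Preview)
Your proof is correct and follows essentially the same approach as the paper: both arguments reduce to $\theta_k\equiv 1$, invoke Theorem~\ref{Th:boundsingularvalues} to control the standard singular values $\sigma_k$, and then read off $\|P^\T P-I\|_*=\|\Sigma^2-I\|_*$. In fact you are slightly more careful than the paper, which only writes down the upper bound $\sigma_i^2\le 1/(1-\varepsilon)$ and tacitly assumes this governs $|\sigma_i^2-1|$; you explicitly check the lower endpoint $\varepsilon/(1+\varepsilon)$ and observe it is dominated by $\varepsilon/(1-\varepsilon)$.
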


\begin{proof}
Since $P$ has $S^\T S$-orthonormal columns, {\color{black} all} $S^\T S$-singular values $\theta_j$ of $P$ are equal to 1. Now, let $P=U\Sigma V^\T$, $\Sigma=\text{diag}(\sigma_1,\ldots,\sigma_n)$, be the standard SVD of $P$.
Since $\sigma_j^2\leq 1/(1-\varepsilon)$ for all $j=1,\ldots,n$, it holds
$$
	\|P^\T P-I\|_2=\|\Sigma^2-I\|_2=\max_{i=1,\ldots,n}|\sigma_i^2-1|
\leq\frac{1}{1-\varepsilon}-1=\frac{\varepsilon}{1-\varepsilon} .
$$
Similarly,
\begin{align*}
 \|P^\T P-I\|_F^2=&\|\Sigma^2-I\|_F^2=\sum_{i=1}^n(\sigma_i^2-1)^2\leq\sum_{i=1}^n\left(
\frac{1}{1-\varepsilon}-1\right)^2=n\left(\frac{\varepsilon}{1-\varepsilon}\right)^2 .
\end{align*}
\end{proof}

The bounds of the {\color{black}above} proposition can be proved in different ways. For {\color{black}example}, directly using the
fundamental property in (\ref{eq:normS}), the orthogonality
$P^\T S^\T S P=I$ and the bounds on the 
2-norm of $P$, we can obtain the bounds of
$\|P^\T P-I\|_2$ by estimating  the inner product
$x^\T (P^\T P-I) x = x^\T P^\T(I-S^\T S) Px$.

We also observe that the previous bounds hold for any
 $S^\T S$-orthogonal matrix, thus also for the one stemming
from the randomized (sketched) QR decomposition~\cite{RandomGS}.

\begin{proposition} \label{prop:orthT}
Let $T\in\R^{m\times n}$ have orthonormal columns.
Let $S$ be a {\color{black}randomized $(\varepsilon,\delta,\text{rank}(T))$}-subspace embedding for $\text{Range}(T)$.
Then,
with high probability,
$$
\|T^\T S^\T S T-I\|_F\le {\varepsilon\sqrt{n}}, \quad\text{and}\quad
\|T^\T S^\T S T-I\|_2\le {\varepsilon}.
$$
\end{proposition}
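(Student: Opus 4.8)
The plan is to exploit the counterpart structure with Proposition~\ref{prop:orthP} and reduce everything to the fundamental distortion estimate~\eqref{eq:normS}. First I would introduce the symmetric matrix $M := T^\T S^\T S T - I$ and, using that $T$ has orthonormal columns so that $I = T^\T T$, rewrite it as $M = T^\T (S^\T S - I) T$. This is the key algebraic step: it exposes $M$ as the compression of $S^\T S - I$ onto $\text{Range}(T)$, which is precisely the subspace for which the embedding property is assumed to hold.

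For the $2$-norm bound, since $M$ is symmetric I would write $\|M\|_2 = \max_{\|x\|_2 = 1} |x^\T M x|$. For a unit vector $x$, setting $v := Tx \in \text{Range}(T)$ gives $\|v\|_2 = \|x\|_2 = 1$ (again because $T$ is orthonormal), and $x^\T M x = v^\T (S^\T S - I) v$. Applying~\eqref{eq:normS} to $v$ then yields $|x^\T M x| \le \varepsilon \|v\|_2^2 = \varepsilon$, and taking the maximum over unit $x$ gives $\|M\|_2 \le \varepsilon$.

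For the Frobenius bound, I would pass to the eigenvalues $\lambda_1, \ldots, \lambda_n$ of the symmetric matrix $M$. Each satisfies $|\lambda_i| \le \|M\|_2 \le \varepsilon$, so $\|M\|_F^2 = \sum_{i=1}^n \lambda_i^2 \le n \varepsilon^2$, whence $\|M\|_F \le \varepsilon \sqrt{n}$. Equivalently, one can observe that the eigenvalues of $T^\T S^\T S T$ are exactly the squared $S^\T S$-singular values $\theta_i^2$ of $T$, and that Theorem~\ref{Th:boundsingularvalues} together with $\sigma_i(T) = 1$ forces $\theta_i^2 \in [1-\varepsilon, 1+\varepsilon]$, giving the same conclusion; this makes the parallel with Proposition~\ref{prop:orthP} explicit.

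The computation itself is entirely routine; the only point requiring care---and the one I would flag as the genuine content---is the probabilistic, \emph{uniform} nature of~\eqref{eq:normS}. The quadratic-form argument needs the bound $|v^\T(I - S^\T S)v| \le \varepsilon \|v\|_2^2$ to hold simultaneously for all $v \in \text{Range}(T)$, not merely for a single fixed vector. This uniformity over the whole $n$-dimensional subspace is exactly what the $\varepsilon$-subspace embedding hypothesis on $\text{Range}(T)$ supplies with high probability, so the ``with high probability'' qualifier in the statement attaches once, to the embedding event, rather than being incurred repeatedly across a sweep of test vectors.
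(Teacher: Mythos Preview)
Your proof is correct. The paper's own proof takes the $S^\T S$-SVD route you mention only as an aside: it writes $T=W\Theta V^\T$, observes that $T^\T S^\T S T - I = V(\Theta^2 - I)V^\T$, and then invokes Theorem~\ref{Th:boundsingularvalues} with $\sigma_i(T)=1$ to bound $|\theta_i^2 - 1|\le\varepsilon$. Your primary argument instead goes directly through the quadratic form and~\eqref{eq:normS}, bypassing the $S^\T S$-SVD machinery; the paper itself remarks, immediately after the proposition, that this alternative is available for the $2$-norm bound. So the two write-ups essentially swap which argument is primary and which is the parenthetical, and your added discussion of the uniform (subspace-wide) nature of the embedding event is a useful clarification that the paper leaves implicit.
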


\begin{proof}
Let $T=W \Theta V^\T$ be the $S^\T S$-singular value decomposition of $T$, and recall that the
standard singular values $\sigma_i$ of $T$ are all equal to 1.
	Then from $\|T^T S^\T S T-I\|_* = \|\Theta^2-I\|_*$ and using Theorem \ref{Th:boundsingularvalues}, it follows
$$
\|T^\T S^\T S T-I\|_2\le \max |\theta_i^2-1| \le \max |(1+\varepsilon)\sigma_i^2 - 1| = \varepsilon,
$$
while
$$
\|T^\T S^\T S T-I\|_F^2\le \sum_i |\theta_i^2-1|^2 \le \sum_i |(1+\varepsilon)\sigma_i^2 - 1|^2 = \varepsilon^2 n,
$$
from which both results follow.
\end{proof}

Again, using the property in (\ref{eq:normS}), the orthogonality bound
$\|T^\T S^\T S T-I\|_2\le {\varepsilon}$ could also be proved by noticing
that $\|T^\T S^\T S T-I\|_2= \|T^\T S^\T S T-T^\T T\|_2$ and that
$x^\T T^\T (S^\T S -I)T x = y^\T (S^\T S -I)y$ with $y=Tx$, where
$\|y\|=\|x\|$.

In the next example we illustrate what to expect in practice, in terms of
distance from full orthogonality, when relying on results such as that
in Proposition~\ref{prop:orthP}.

  \begin{table}[t!]
   \centering
   \begin{tabular}{rrrr}
    $s$ & $\|W^\T W-I\|_F$ &  $\|W^\T W-I\|_2$ & Time (s)\\
    \hline
    $55\log n$ & 16.84& 0.99 & 0.27\\
    $60\log n$ & 17.04& 0.99& 0.28\\
    $65 \log n$ & 17.13& 0.99& 0.28\\
    %$70 \log n$ & 7.87& 1.36& 1.16\\
   \end{tabular}
\caption{Example~\ref{ex:1.1}. Loss of orthogonality (Frobenius and Euclidean norm) 
of the first $n$ left $S^\T S$-singular left vectors of $A$ (i.e. the columns of $W$)
as $s$ varies. ``Time'' (in secs) is the running time to compute the $S^\T S$-SVD of $A$.
The reported results are averaged over 50 runs.  \label{tab:ex1}}
  \end{table}

  \begin{example}\label{ex:1.1}
{\rm
We consider the matrix $A\in\mathbb{R}^{m\times n}$, $m=300\,000$, $n=300$, 
generated by the matlab function {\tt A=sprand(m,n,0.003,1e-10)}. 
Therefore, only about 0.3\% of the entries of $A$ 
are non-null, and the nonzero values are taken from a uniform random distribution. Moreover, $\kappa(A)=10^{10}$.

% whereas the singular value distribution of $A_2$ displays a much slower decay.

For this example we change the nature of the sketching and consider
 $S\in\mathbb{R}^{s\times m}$ to be a Gaussian matrix.
The cost of computing $SA$ with $A\in\mathbb{R}^{m \times n}$ is 
now $\mathcal O(s\cdot\text{nnz}(A))$ flops where $\text{nnz}(A)$ denotes the number of nonzero entries of $A$. 
Moreover, to embed an $n$-dimensional subspace with failure probability $\delta$, it is sufficient to choose $s=\mathcal{O}(\varepsilon^{-2}\log n\log\frac{1}{\delta})$; see, e.g.,~\cite[Theorem 2]{Sarlos2006}.
%\todo{l'argomentaz e' contro gaussian? DP: tagliato. in effetti non c'è per forza bisogno di fare confronto. Possiamo semplicemente listare costi ed sketching dimension per la Gaussiana.}

%This is confirmed by the results reported in Figure~\ref{fig:svd}. Indeed, for both $A_1$ and $A_2$, when we select $s=2n$, the bound $|\sigma_i^2-\theta_i^2|/\sigma_i^2\leq 0.5$ does not hold for all $i$. This issue is fixed by increasing $s$.

  We are interested in illustrating the bounds in Proposition~\ref{prop:orthP}.
  In Table~\ref{tab:ex1}, for different values of $s$ we report $\|W^\T W-I\|_*$, $*=2,F$, where the columns of $W\in\mathbb{R}^{m\times n}$ are the first $n$ left $S^\T S$-singular vectors of $A$.
  If we set $\varepsilon=0.5$, as it is common when working with oblivious subspace embeddings, then $\varepsilon/(1-\varepsilon)=1$ so that the bounds in Proposition~\ref{prop:orthP} are satisfied whenever $\|W^\T W-I\|_F\leq\sqrt{300}\leq 18$ and $\|W^\T W-I\|_2\leq 1$.

  For $\varepsilon=0.5$ and $\delta=10^{-6}$, we should select
$s=\mathcal{O}(56\log n)$ for $S$ to be a {\color{black}randomized $(\varepsilon,\delta,n)$}-subspace embedding with probability $1-\delta$. From the results in Table~\ref{tab:ex1} we can see that, with this set of parameters, the bounds in Proposition~\ref{prop:orthP} are always fulfilled.
In Table~\ref{tab:ex1} we also document the
running times devoted to computing the $S^\T S$-SVD of $A$.
 We notice that these timings only mildly depend on the sketching 
dimension $s$, and they are rather competitive 
with the timing required to compute the standard SVD of $A$, which is 2.89 seconds
with the matlab {\tt svd} function\footnote{$\mathtt{[U,Sigma,V]=svd(A,0)}$.}. 
This is mainly due to the need of {\tt svd} to work with a matrix allocated in 
full format\footnote{The time required for changing $A$ to full format was
not included.}. 
For the chosen sketching this drawback does not affect the
 $S^\T S$-SVD routine, that can fully exploit the sparsity of $A$.
%
%{\color{red}Since for the $S^\T S$-SVD we employ a in-house implementation of Algorithm~\ref{alg:randGS}, to have fair comparisons we do the same also for the standard SVD. In particular, we implemented a basic Gram-Schmidt QR routine to compute the QR factorization of $A_1$. Then we compute the SVD of the triangular factor by the matlab function {\tt svd}. This implementation requires about 28 seconds to compute the SVD of $A_1$. }
%We must mention, however, that using the matlab function {\tt qr} to compute a skinny QR of $A_1$ would result in much lower computational timings.

% From the results reported in Table~\ref{tab:ex1} we can see that the running times for computing the $S^\T S$-SVD of $A_1$ are very competitive and, .
%
\begin{flushright}
 $\diamond$
\end{flushright}
 }

\end{example}

%%%%%%%%%%%%%%%%%%%%%%%%%%%%%%%%%%%%%
\section{The nearest {\color{black}$S^{\top\!}S$-}orthogonal matrix}\label{The nearest sketched orthogonal matrix}
With the goal of reducing the solution cost when dealing with large dimensional matrices, 
in~\cite{Bjorck} the authors derived an iterative algorithm for~\eqref{eq:main0}. 
With the same purposes, in this 
section we propose to explore the use of sketching techniques.
%pursue a different path. 
%In particular, in light of the great success that sketching procedures are having in reducing the cost of solving (vector) least squares problems, it is somehow natural to employ these tools also for treating~\eqref{eq:main0}. 
To this end, we first transform the problem by using the $S^\T S$-norm.
%In particular, 
Given a matrix $A\in\mathbb{R}^{m\times n}$, $m\geq n$, we
consider the following matrix minimization problem
\begin{equation}\label{eq:main}
 \min_{\substack{Q\in\mathbb{R}^{m\times n}
 \\ Q^\T S^\T S Q=I}}\|A-Q\|_{S,*},\quad *=2,F.
\end{equation}
Thanks to the $S^\T S$-SVD in section~\ref{SS-SVD},
 we are going to show the existence and uniqueness of its solution and relate 
the latter to the solution of the original problem~\eqref{eq:main0}.

%where $S\in\mathbb{R}^{s\times m}$ is a sketching matrix amounting
%to an $\varepsilon$-subspace embedding for $\text{Range}(A)$ and
%%
%$$\|B\|^2_{S^\T S}:=\text{trace}(B^\T S^T S),$$
%%
%is the $S^T S$-norm induced by the $S^T S$-inner product $\langle x,y\rangle_{S^T S}:=y^T S^T S$.

%We construct the matrix $P=WUV^T\in\mathbb{R}^{m\times n}$ as follows. 
%To construct the minimizer in (\ref{eq:main}),
% we compute the ``randomized'' QR factorization of $A$:
%we let $SA=GR$ be the reduced QR  decomposition of the sketched matrix $SA$,
%and then we define $W=AR^{-1}$. Hence, we have obtained the factorization
%%
%\begin{equation}\label{eqn:WR}
%A=WR,\quad {\rm with} \,\, W^\T S^\T S W=I,\;R\text{ upper triangular}.
%\end{equation}
%%
%Then we compute the deterministic SVD of $R\in\mathbb{R}^{n\times n}$, namely $R=U\Theta V^\T$.

Now let $A=W\Theta V^\T$ be the $S^\T S$-SVD of $A$. Then
\begin{equation}\label{eqn:P}
P=WV^\T,
\end{equation}
is still $S^\T S$-orthogonal thanks to Remark~\ref{rem:invariance}.
Moreover, $P$ is the orthogonal factor of the \emph{randomized} polar decomposition of $A$. Indeed, we can write
\begin{equation}\label{eq:rand_polar}
    A=PH,\quad {\rm with}\,\,
H=V\Theta V^\T,
\end{equation}
with $\Theta={\rm diag}(\theta_1, \ldots, \theta_n)$ and
$\theta_i\ge 0$, so that $H$ is positive semidefinite. {\color{black} Notice that, to the best of our knowledge, a randomization-based polar decomposition has never been proposed prior this work. Indeed, the decomposition~\eqref{eq:rand_polar} is a natural consequence of our $S^\T S$-SVD thanks to the availability of the left singular matrix $W$.}

In the next theorem we prove that $P$ defined  in (\ref{eqn:P}) solves
the problem (\ref{eq:main}).
To this end, we notice that we
 can parametrize all $S^\T S$-orthogonal matrices spanning Range$(A)$
 by introducing the following set
$$
{\cal Q}_S(A)= \{ Q\in{\mathbb R}^{m\times n}
\, : \, Q=W L V^\T, \,\, {\rm with}\,\, L^\T L=I, \,A=W\Theta V^\T\}.
$$ 

The following result holds.
%Then the following result holds.

\begin{theorem}
Let $A=W\Theta V^\T$ be the $S^\T S$-SVD of $A$ where $S$ is a {\color{black}randomized $(\varepsilon,\delta,\text{rank}(A))$}-subspace
embedding of $\text{Range}(A)$.
 Let $A=PH$ with $P\in {\cal Q}_S(A)$ defined in (\ref{eqn:P}) and
$H=V\Theta V^\T$. Then with high probability,
 \begin{equation}
 \|A-P\|_{S,*}=
\min_{Q\in {\cal Q}_S(A)}
%\substack{Q\in\mathbb{R}^{m\times n},
% \\ Q^\T S^\T S=I}}
\|A-Q\|_{S,*},
\end{equation}
where $*=2,F$.
\end{theorem}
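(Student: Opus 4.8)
The plan is to reduce the minimization over $\mathcal{Q}_S(A)$ to a \emph{classical} nearest-orthogonal-matrix problem for the diagonal matrix $\Theta$, by exploiting the $S^\T S$-orthogonality of $W$. First I would write a generic element $Q=WLV^\T\in\mathcal{Q}_S(A)$ with $L^\T L=I$, so that $A-Q=W(\Theta-L)V^\T$. Substituting into the two $S^\T S$-norms and using $W^\T S^\T S W=I$ together with $V^\T V=I$, the sketching matrix disappears: in the Frobenius case $\SF{A-Q}^2=\trace\!\big(V(\Theta-L)^\T(\Theta-L)V^\T\big)=\trace\!\big((\Theta-L)^\T(\Theta-L)\big)=\|\Theta-L\|_F^2$, and in the Euclidean case, substituting $y=V^\T x$ and using orthonormality of the columns of $V$, $\Sdue{A-Q}^2=\max_{\|y\|_2=1}y^\T(\Theta-L)^\T(\Theta-L)y=\|\Theta-L\|_2^2$. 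Thus in both norms the problem becomes $\min_{L^\T L=I}\|\Theta-L\|_*$, $*=2,F$, which no longer involves $S$.

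Next I would solve this reduced problem. Since $\Theta=\text{diag}(\theta_1,\dots,\theta_r)$ with $\theta_i\ge 0$, the trivial factorization $\Theta=I\cdot\Theta$ is a polar decomposition with positive semidefinite Hermitian factor, so $I$ is the natural candidate minimizer. For the Frobenius norm this is the orthogonal Procrustes problem: expanding $\|\Theta-L\|_F^2=\|\Theta\|_F^2-2\sum_i\theta_i L_{ii}+r$ and bounding $\sum_i\theta_i L_{ii}\le\sum_i\theta_i$ (because $|L_{ii}|\le 1$ for orthogonal $L$ and $\theta_i\ge 0$), with equality when $L=I$ (uniquely so if all $\theta_i>0$), shows that $L=I$ attains the minimum. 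For the Euclidean norm I would invoke the Fan--Hoffman result that the orthogonal polar factor is a closest orthogonal matrix in every unitarily invariant norm; applied to $\Theta=I\cdot\Theta$ this again yields $L=I$. In both cases the optimal $Q$ is $WIV^\T=WV^\T=P$, which is precisely $(\ref{eqn:P})$.

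The genuinely new ingredient, and the step I expect to require the most care, is the first reduction, since it is what converts the nonstandard $S^\T S$ geometry into the ordinary one. It hinges on $W^\T S^\T S W=I$, which holds \emph{with high probability} by the existence of the $S^\T S$-SVD of Theorem~\ref{th:SSsvd}; this is also the source of the ``with high probability'' qualifier in the statement. I would also make explicit that the $S^\T S$-norms are well defined on the relevant space: every $A-Q$ with $Q\in\mathcal{Q}_S(A)$ has range contained in $\text{Range}(W)\supseteq\text{Range}(A)$, on which $S$ is a subspace embedding, so both $\SF{\cdot}$ and $\Sdue{\cdot}$ are bona fide norms there. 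Since the reduction uses only $W^\T S^\T S W=I$ and the orthonormality of $V$ (and not global positive definiteness of $S^\T S$), the argument parallels Van Loan's deterministic $\mathcal{H}$-SVD treatment of the nearest orthogonal matrix while accommodating the merely semidefinite $S^\T S$.
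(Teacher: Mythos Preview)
Your proof is correct and follows essentially the same route as the paper: both reduce $\|A-Q\|_{S,*}$ to the classical problem $\min_{L^\T L=I}\|\Theta-L\|_*$ via $W^\T S^\T S W=I$ and $V^\T V=I$, and both handle the Frobenius case by the diagonal-entry bound $|L_{ii}|\le 1$. The only difference is in the $2$-norm step, where the paper gives a direct elementary argument (evaluating the quadratic form $(\Theta-I)^2+\Theta(I-L)+(I-L)^\T\Theta$ at the canonical vector $e_{\bar p}$ realizing $\max_i(\theta_i-1)^2$) while you invoke the Fan--Hoffman characterization of the polar factor; both are valid and lead to the same conclusion.
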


\begin{proof}
 Using $A=PH$ and the $S^\T S$-orthogonality of $P$, we first notice that
 $$
\|A-P\|^2_{S,*}=\|P(H-I)\|^2_{S,*}=\|H-I\|_*^2=
\|\Theta-I\|_*^2.
 $$
%
% $$
%\|A-P\|^2_{S^\T S}=\|WU(\Sigma-I)V^\T\|_{S^T S}^2=\text{trace}(V(\Sigma-I)U^TW^T S^T S(\Sigma-I)V^T)=\|\Sigma-I\|_F^2.$$
% %%
% Moreover, any $Q\in\mathbb{R}^{m\times n}$ such that $Q^T S^T S=I$ can be written as
 %%
% $$Q=WULV^T,$$
%%
%where the matrix $L\in\mathbb{R}^{n\times n}$ takes into account the coefficients of the change of basis and it must be orthogonal to preserve the $S^T S$-orthogonality of $Q$.

We first focus on $*=F$. For the same reasonings as above, for any $Q\in {\cal Q}_S(A)$ and using
that $L^\T L=I$, we have
\begin{align*}
 \SF{A-Q}^2=&\,\|\Theta-L\|_F^2=\|\Theta -I+I-L\|_F^2\\
 =&\,{\rm trace}\left((\Theta-I)^2+(\Theta-I)(I-L)+(I-L)^\T(\Theta-I)+(I-L)^\T(I-L)\right)\\
 =&\,{\rm trace}\left((\Theta-I)^2+\Theta(I-L)+(I-L)^\T\Theta\right)
 =\,\|\Theta-I\|_F^2+  2\sum_{i=1}^n\theta_i(1-\ell_{ii}).
\end{align*}
%thanks to the orthogonality of $L$, i.e., $L^\TL=I$.
%
The orthogonality of $L$ also implies that $\ell_{i,i}\leq 1$ with 
the equality attained for all $i$s if and only if $L=I$, i.e., $Q=P$. Therefore, 
 $2\sum_{i=1}^n\theta_i(1-\ell_{ii})\geq0$ for $L\ne I$.
%$\text{trace}\left(\Sigma(I-L)\right)$, 
%$\text{trace}\left((I-L)^\T\Sigma\right)\geq 0$.
In conclusion, we thus have
$$\SF{A-Q}^2\geq \|\Theta-I\|_F^2=\SF{A-P}^2,$$
which shows the first result.

For $*=2$, we have that
$$\Sdue{A-P}^2=\max_{\|x\|_2=1} x^\T(\Theta-I)^2x=\max_{i=1,\ldots,n}(\theta_i-1)^2,
$$
which means that the vector $x$ attaining the maximum is one of the vectors of the canonical basis of $\mathbb{R}^n$, namely an $e_{\bar p}$ for a certain $\bar p\in\{1,\ldots,n\}$.

For a generic $S^\T S$-orthogonal matrix $Q=WLV^T$ it holds

\begin{align*}
 \Sdue{A-Q}^2=&\;\|\Theta-L\|_2^2=\| \Theta-I+I-L\|_2^2=\max_{\|x\|_2=1} x^\T\left((\Theta-I)^2+\Theta(I-L)+(I-L)^\T\Theta\right)x\\
 \geq&\;
e_{\bar p}^\T\left((\Theta-I)^2+\Theta(I-L)+(I-L)^\T\Theta\right)e_{\bar p}\\
=&\;\Sdue{A-P}^2+e_{\bar p}^\T\left(\Theta(I-L)+(I-L)^\T\Theta\right)e_{\bar p}.
\end{align*}
For the same reasonings as above, 
$e_{\bar p}^\T\left(\Theta(I-L)+(I-L)^\T\Theta\right)e_{\bar p}\geq 0$, and the 
result {\color{black}follows. $\square$}
\end{proof}

Generalizations of the polar decomposition when using {\it deterministic and positive definite} inner products
have been discussed in detail in \cite{doi:10.1137/090765018}.

The columns of the matrix $P$ are $S^\T S$-orthogonal but not orthogonal. Hence, a natural question is how
close $P$ is to a matrix with orthonormal columns.
To this end, let $P=Q_P H_P$ be the polar decomposition of $P$,
where $Q_P$ has orthonormal columns and is the closest such
matrix to $P$.
The following bound was shown in \cite[Lemma 5.1]{Higham.94} for a general $P$
of full column rank,
\begin{equation}\label{Bound:Higham}
\frac{\|P^\T P-I\|_2}{\|P\|_2+1} \le
\|P-Q_P\|_2
\le
\|P^\T P-I\|_2.
\end{equation}
For $P$ being $S^\T S$-orthogonal, by using Proposition~\ref{prop:orthP} 
we thus have, with high probability,

\begin{equation}\label{eqn:PQnorm}
\|P-Q_P\|_2 
\le
\|P^\T P-I\|_2\le
\frac{\varepsilon}{1-\varepsilon} .
\end{equation}

Analogously, given a matrix $T$ with orthonormal columns,
we can estimate how
close $T$ is to a matrix with $S^\T S$-orthonormal columns.

\begin{lemma}
Given a matrix $T$ with orthonormal columns,
let $Q_T$ be the $S^\T S$-orthogonal factor of the $S^\T S$-polar decomposition
of the matrix $T$. Then with high probability  
%\todo{VS:to be checked DP: checked. I had to add one step because it was not clear to me at first.}
\begin{equation}\label{eqn:TQnorm}
\Sdue{T-Q_T}
\le
{\varepsilon} .
%\frac{\varepsilon}{1-\varepsilon} .
\end{equation}
\end{lemma}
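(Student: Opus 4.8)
The plan is to reduce the quantity $\Sdue{T-Q_T}$ to a statement about the $S^\T S$-singular values of $T$, and then to invoke Theorem~\ref{Th:boundsingularvalues} together with the observation that the standard singular values of $T$ are all equal to one. First I would write the $S^\T S$-SVD of $T$ as $T=W\Theta V^\T$. Since $S$ is an $\varepsilon$-subspace embedding for $\text{Range}(T)$ and $T$ has full column rank $n$, the dimension $r=\min\{s,n\}$ equals $n$, so $V\in\mathbb{R}^{n\times n}$ is a \emph{square} orthogonal matrix; this will matter below. Exactly as in the construction of the nearest sketched orthogonal matrix, the $S^\T S$-polar decomposition of $T$ is $T=Q_T H_T$ with $Q_T=WV^\T$ (which is $S^\T S$-orthogonal by Remark~\ref{rem:invariance}) and $H_T=V\Theta V^\T$ symmetric positive semidefinite.

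Next I would compute the distance in closed form, mirroring the argument already used for the nearest sketched orthogonal matrix. Writing $T-Q_T=Q_T(H_T-I)$ and using the $S^\T S$-orthogonality $Q_T^\T S^\T S Q_T=I$ together with the symmetry of $H_T$, one obtains
\begin{equation*}
\Sdue{T-Q_T}^2=\max_{\|x\|_2=1} x^\T (H_T-I)^\T Q_T^\T S^\T S Q_T (H_T-I) x=\max_{\|x\|_2=1} x^\T (H_T-I)^2 x=\|H_T-I\|_2^2.
\end{equation*}
Because $V$ is square orthogonal, $H_T-I=V(\Theta-I)V^\T$, hence $\|H_T-I\|_2=\|\Theta-I\|_2=\max_i|\theta_i-1|$, where the $\theta_i$ are the $S^\T S$-singular values of $T$.

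It then remains to bound $\max_i|\theta_i-1|$. Since $T$ has orthonormal columns its standard singular values satisfy $\sigma_i=1$ for all $i$, so Theorem~\ref{Th:boundsingularvalues} in squared form gives $1-\varepsilon\le\theta_i^2\le 1+\varepsilon$, i.e.\ $|\theta_i^2-1|\le\varepsilon$. Using $\theta_i\ge 0$, and hence $\theta_i+1\ge 1$, I would convert this multiplicative bound into the additive one
\begin{equation*}
|\theta_i-1|=\frac{|\theta_i^2-1|}{\theta_i+1}\le |\theta_i^2-1|\le\varepsilon,
\end{equation*}
which yields $\Sdue{T-Q_T}=\max_i|\theta_i-1|\le\varepsilon$, as claimed. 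The only genuinely delicate points here are structural rather than computational: one must ensure that $V$ is square so that the reduction to $\|\Theta-I\|_2$ is exact, and one must pass correctly from the squared embedding bound $|\theta_i^2-1|\le\varepsilon$ to $|\theta_i-1|\le\varepsilon$. The factor $(\theta_i+1)^{-1}\le 1$ makes the latter step clean, and it is precisely what guarantees the stated constant $\varepsilon$ rather than a larger one; everything else is a direct transcription of the computation already carried out for the nearest sketched orthogonal matrix.
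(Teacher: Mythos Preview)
Your proof is correct. Both you and the paper first reduce $\Sdue{T-Q_T}$ to $\|H_T-I\|_2$ via the $S^\T S$-orthogonality of $Q_T$, but the final bound is obtained differently. You compute $\|H_T-I\|_2=\max_i|\theta_i-1|$ directly from the $S^\T S$-SVD and then use the elementary inequality $|\theta_i-1|=|\theta_i^2-1|/(\theta_i+1)\le|\theta_i^2-1|\le\varepsilon$. The paper instead adapts Higham's polar-decomposition argument: it writes $T^\T S^\T S T-I=(T-Q_T)^\T S^\T S\,Q_T(H_T+I)$, inverts $H_T+I$, and obtains $\|H_T-I\|_2\le\|T^\T S^\T S T-I\|_2$, after which Proposition~\ref{prop:orthT} gives the bound $\varepsilon$. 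The two routes are equivalent in content---the paper's use of $\|(H_T+I)^{-1}\|_2\le 1$ is exactly your $(\theta_i+1)^{-1}\le 1$---but your argument is shorter and avoids the detour through the factorization identity, while the paper's approach mirrors the classical deterministic proof and makes the connection to Proposition~\ref{prop:orthT} explicit.
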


\begin{proof}
We need to generalize the upper bound in (\ref{Bound:Higham}). To this end,
we follow the proof in \cite[Lemma 5.1]{Higham.94}.
Let $T=Q_T H$ be the $S^\T S$-polar decomposition of $T$, with $H$ symmetric
and positive definite.
In particular, $(ST)^\T ST=H^2$. 
%Let $\Theta$ be the matrix of eigenvalues of $H$.
With explicit computation we can verify that 
\begin{equation}\label{eqn:T S}
T^\T S^\T S T-I =
(T-Q_T)^\T S^\T S(T+Q_T)=
(T-Q_T)^\T S^\T S Q_T(H+I).
\end{equation}
Since $H+I$ is nonsingular, we can write
$$(T-Q_T)^\T S^\T S Q_T= (T^\T S^\T S T-I)(H+I)^{-1},$$
so that
$$
\|(T-Q_T)^\T S^\T S Q_T\|_2\le 
\|T^\T S^\T S T-I\|_2 \|(H+I)^{-1}\|_2\le 
\|T^\T S^\T S T-I\|_2.
$$
Moreover, 
$$
\Sdue{T-Q_T} = \|H-I\|_2=
 \|Q_T^\T S^\T S Q_T(H-I) \|_2=
 \|Q_T^\T S^\T S (T-Q_T) \|_2,
% \|(T-Q_T)^\T S^\T S Q_T\|_2,
$$
so that
$$
\Sdue{T-Q_T}\leq \|T^\T S^\T S T-I\|_2,
$$
and the result follows from Proposition~\ref{prop:orthT}. \hfill {\color{black} $\square$}
\end{proof}

We are now left to evaluate the quality of the minimizer when either orthogonality or sketched orthogonality is used.

\begin{proposition} \label{Prop_relate_nearestproblems}
 Let $P$ solve (\ref{eq:main}), and let
$T$ solve (\ref{eq:main0}) in the 2-norm.
With the previous notation, with high probability it holds 
$$
\|A-T\|_2-\frac{\varepsilon}{1-\varepsilon} 
\le
\|A-P\|_2 
\le
\frac{1+\varepsilon}{1-\varepsilon}
\|A-T\|_2+\frac{\varepsilon}{1-\varepsilon} .
$$
\end{proposition}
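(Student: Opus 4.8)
The plan is to sandwich $\|A-P\|_2$ by playing the optimality of each of the two minimizers against a competitor drawn from the \emph{other} feasible set, and then to convert between the Euclidean and $S^\T S$ norms through the equivalence inequalities stated after Definition~\ref{Def:SSnorms}. The two auxiliary distance estimates already at hand, namely the Higham-type bound~\eqref{eqn:PQnorm} controlling $\|P-Q_P\|_2$ and its $S^\T S$-analogue~\eqref{eqn:TQnorm} controlling $\Sdue{T-Q_T}$, are exactly what is needed to relate a feasible point of one problem to the solution of the other.

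For the lower bound I would exploit that $T$ is optimal among matrices with orthonormal columns, while $Q_P$, the orthogonal polar factor of $P$, is a legitimate competitor for~\eqref{eq:main0}. Hence $\|A-T\|_2\le\|A-Q_P\|_2$, and a single triangle inequality together with~\eqref{eqn:PQnorm} gives
\[
\|A-T\|_2\le\|A-Q_P\|_2\le\|A-P\|_2+\|P-Q_P\|_2\le\|A-P\|_2+\frac{\varepsilon}{1-\varepsilon},
\]
which rearranges into the claimed lower bound.

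For the upper bound I would instead use that $P$ solves~\eqref{eq:main}, so $\Sdue{A-P}\le\Sdue{A-Q}$ for every $S^\T S$-orthogonal $Q$; the natural competitor is $Q_T$, the $S^\T S$-orthogonal polar factor of $T$, which is feasible and close to $T$ by~\eqref{eqn:TQnorm}. A triangle inequality in the $S^\T S$-norm followed by $\Sdue{A-T}\le\sqrt{1+\varepsilon}\,\|A-T\|_2$ then yields $\Sdue{A-P}\le\sqrt{1+\varepsilon}\,\|A-T\|_2+\varepsilon$. Passing back to the Euclidean norm via $\|A-P\|_2\le\Sdue{A-P}/\sqrt{1-\varepsilon}$ produces $\sqrt{(1+\varepsilon)/(1-\varepsilon)}\,\|A-T\|_2+\varepsilon/\sqrt{1-\varepsilon}$.

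The main (though modest) obstacle is twofold. First, one must ensure that $Q_T$ genuinely lies in the feasible set of~\eqref{eq:main} and that $S$ remains a valid embedding for $\text{Range}(T)=\text{Range}(A)$, so that~\eqref{eqn:TQnorm} and the polar factorization of $T$ are available; this is where the ``with high probability'' qualifier is consumed. Second, the norm conversions naturally generate the \emph{sharper} constants $\sqrt{(1+\varepsilon)/(1-\varepsilon)}$ and $\varepsilon/\sqrt{1-\varepsilon}$, so a final cosmetic step is required to recover the stated form: one observes that $\sqrt{(1+\varepsilon)/(1-\varepsilon)}\le(1+\varepsilon)/(1-\varepsilon)$ (the right-hand side being the square of a quantity at least $1$) and $\varepsilon/\sqrt{1-\varepsilon}\le\varepsilon/(1-\varepsilon)$, both valid for $\varepsilon\in(0,1)$, whence the weaker bound in the statement follows.
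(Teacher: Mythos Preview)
Your argument is correct and follows the paper's proof essentially line for line: the same competitors $Q_P$ and $Q_T$, the same triangle inequalities, and the same norm conversions are used. The only difference is cosmetic---the paper applies the coarser relations $\Sdue{\cdot}\le(1+\varepsilon)\|\cdot\|_2$ and $(1-\varepsilon)\|\cdot\|_2\le\Sdue{\cdot}$ directly (valid since $\sqrt{1+\varepsilon}\le1+\varepsilon$ and $1-\varepsilon\le\sqrt{1-\varepsilon}$), thereby arriving at the stated constants without your final weakening step; your route in fact yields the slightly sharper upper bound $\sqrt{(1+\varepsilon)/(1-\varepsilon)}\,\|A-T\|_2+\varepsilon/\sqrt{1-\varepsilon}$ before relaxing.
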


\begin{proof}
Let $Q_P$ be the orthogonal factor of the polar decomposition of $P$.
Then
$$
\|A-T\|_2\le \|A-Q_P\|_2\le \|A-P\|_2+\|P-Q_P\|_2 
\le \|A-P\|_2+\frac{\varepsilon}{1-\varepsilon} ,
$$
where, in the last inequality, (\ref{eqn:PQnorm}) was used. 
This gives the left-hand side bound.
To obtain the bound on the right side, let $Q_T$ be the $S^\T S$-orthogonal factor
of the $S^\T S$-polar decomposition of $T$. Then
$$
\Sdue{A-P}\le
\Sdue{A-Q_T}\le
%(1+\varepsilon)\|A-Q_T\|_2\le (1+\varepsilon)
\Sdue{A-T} + \Sdue{T-Q_T}
\le
(1+\varepsilon)\|A-T\|_2 + \varepsilon
$$
where in the last inequality, (\ref{eqn:TQnorm}) was used.

The result follows from recalling that
$(1-\varepsilon)\|A-P\|_2\le \Sdue{A-P}$. \hfill {\color{black} $\square$}
\end{proof}

Proposition~\ref{Prop_relate_nearestproblems} shows that when solving~\eqref{eq:main}, the minimizer $P$ attains an error in the 2-norm which is not too far from the best attainable error (in that norm). Moreover, the cheaper computation of the $S^TS$-SVD could also make the exact solution of~\eqref{eq:main} affordable in the case of large dimensional problems.
Notice that from the proof of Proposition~\ref{Prop_relate_nearestproblems}, it is evident that the factor
$\frac{1+\varepsilon}{1-\varepsilon}$ arises in relating the
matrix 2-norm and the $S,2$-norm.

\begin{example}\label{ex:ex2}
{\rm
We consider the matrix $A\in\mathbb{R}^{m\times n}$ stemming from the benchmark problem {\tt abtaha2} in the SuiteSparse Matrix Collection Repository\footnote{Available at
{\tt https://sparse.tamu.edu/}.}. The matrix has size
$37932\times 331$. We compute the nearest orthogonal matrix $T$ to $A$ in the 2-norm, that is, we solve~\eqref{eq:main0} for $*=2$, and 
compare the obtained solution with $P$ coming from  solving~\eqref{eq:main} for the $S,2$ norm. To this end, we consider the sketching~\eqref{eq:sketching_trig} for different values of $s$.

To construct $P$, we first compute the factors of the $S^\T S$-SVD of $A$, i.e., $A=W\Theta V^\T$ as shown in section~\ref{SS-SVD}. We then set $P=WV^\T$. Similarly, to obtain $T$, we compute the 
SVD of $A$, $A=U\Sigma Y^\T$, by the matlab function {\tt svd}, and set $T=UY^\T$.
This choice of $T$ yields $\|A-T\|_2=24.77$,  and the time devoted 
to its computation is 1.09 seconds.  

In Table~\ref{tab:ex2} we report the distance from $A$ attained 
by $P$, measured in the 2-norm. This is  not far from the one 
provided by $T$, as predicted by Proposition~\ref{Prop_relate_nearestproblems}. 
Indeed, for $\varepsilon=0.5$, 
Proposition~\ref{Prop_relate_nearestproblems} states that, with high probability,
$$\|A-T\|_2-1\leq\|A-P\|_2\leq 3\|A-T\|_2+1,$$
and these bounds are satisfied  for all tested values of $s$.
We also report $\|P-T\|_2$ and the running times for computing $P$ while varying $s$. 
Looking at the results in Table~\ref{tab:ex2}, we can see that
$\|P-T\|_2$ is always rather moderate, with up to
55\% cuts in running time when computing $P$ instead of $T$.

  \begin{table}[t!]
   \centering
   \begin{tabular}{rrrr}
     &    $S^\T S$-orth  & Distance & \\
 $s$ & $\|A-P\|_2$ &  $\|P-T\|_2$ & Time (s)\\
    \hline
    $2n$ & 24.99& 3.69 & 0.46\\
    $4n$ & 24.80& 2.57& 0.46\\
    $6n$ & 24.77& 2.36& 0.47\\
    $8n$ & 24.76& 2.27& 0.48\\
    $10n$ & 24.75 & 2.21& 0.48\\
    $12n$ & 24.74& 2.18& 0.49\\
   \end{tabular}
\caption{Example~\ref{ex:ex2}. 2-norm distance with the solution $P$ to~\eqref{eq:main}, 2-norm difference between $P$ and the solution $T$ to~\eqref{eq:main0}, and the run times for computing $P$, as the sketching dimension $s$ varies.
The reported results are averaged over 50 runs. 
\label{tab:ex2}}
  \end{table}
  }
\end{example}

%%%%%%%%%%%%%%%%%%%%%%%%%%%%%%%%%%%%%
\section{Conclusions}\label{Conclusions}
We have formally introduced the randomization-based
 $S^\T S$-SVD decomposition of a given tall matrix $A$, which holds with high
probability. 
This decomposition resembles the standard SVD, where, however,
the left singular vectors are constrained to be orthonormal with respect to the $S^\T S$-norm. 
The $S^\T S$-SVD has then been employed to derive a number of results scattered in the literature, and to directly derive probabilistic bounds on the distance from (standard) orthogonality of the sketched orthogonal factor,
also in state-of-the-art randomized {\color{black}algorithms for the QR factorization}.
We believe that the $S^\T S$-SVD has the potential to fully characterize the behavior of sketching techniques applied to least squares problems.

We have also studied the related problem of finding the nearest orthogonal matrix to $A$, in the same $S^\T S$-norm,
 and adopted the $S^\T S$-SVD of $A$ for its solution. Additional comparison
bounds have complemented our presentation, illustrating that sketched orthogonality
allows one to obtain results comparable to those with Euclidean orthogonality, at a
significant lower computational cost.

%%%%%%%%%%%%%%%%%%%%%%%%%%%%%%%%%%%%%
\section*{Acknowledgments}
Both authors are members of the INdAM Research
Group GNCS. Moreover, their work
was partially supported by the European Union - NextGenerationEU under the National Recovery and Resilience Plan (PNRR) - Mission 4 Education and research
- Component 2 From research to business - Investment 1.1 Notice Prin 2022 - DD N. 104 of 2/2/2022,
entitled “Low-rank Structures and Numerical Methods in Matrix and Tensor Computations and their
Application”, code 20227PCCKZ – CUP J53D23003620006.

\section*{Conflicts of interest}
Not applicable

\bibliographystyle{siamplain}
\bibliography{references}

\end{document}